\documentclass[11pt]{article}
\usepackage{graphicx} 
\usepackage{amsmath}
\usepackage{amsfonts}
\usepackage{amsthm}
\usepackage{forest}
\usepackage{tikz}
\usetikzlibrary{automata, positioning, arrows.meta}

\usepackage[left=2cm,right=2cm,top=2cm,bottom=2cm]{geometry}

\theoremstyle{plain}
\newtheorem{theorem}{Theorem}

\newtheorem{lemma}{Lemma}
\newtheorem{proposition}{Proposition}
\newtheorem{corollary}{Corollary}
\newtheorem{conjecture}{Conjecture}

\theoremstyle{definition}
\newtheorem{definition}{Definition}
\newtheorem{remark}{Remark}
\newtheorem{example}{Example}

\title{Group Theory of the Kolakoski Sequence}
\author{Noah MacAulay\footnote{Dalhousie University. email:usernameneeded@gmail.com}}

\begin{document}

\maketitle

\begin{abstract}
Run-length decoding is an operation on sequences in which a positive integer $a$ is replaced by a run(sequence of repeated elements) of length $a$. \emph{Iterated} run-length decodings applied to sequences with alphabets consisting of pairs of positive integers $\{p, q\}$ have attracted attention from mathematicians, most notably in their role defining the well-known \emph{Kolakoski sequence}. $n$-th-iterated run-length decodings are controlled by naturally associated permutation automata $A^{p,q}_n$. Here we study the transformation groups $\mathcal{K}^{p,q}_n$ of these automata. They are subgroups of the automorphism group of binary trees of depth $n+1$. They are naturally subgroups of(and likely equal to) a certain group $\mathcal{J}_n^{p,q}$ with an intricate recursive structure; their limit group is plausibly weakly regular branch. As an application we determine the number of maximal-length orbits of the automata given an arbitrary input sequence for odd $n$.

\end{abstract}

\section{Introduction}

Run-length encoding is a fundamental operation on sequences, which encodes an input sequence of elements in terms of the length and content of its runs(sequences of repeated elements). Its inverse operation is run-length \emph{decoding}, which maps sequences of pairs $(l, c)$ to runs of the content $c$ of length $l$. For sequences over a two-element alphabet $\{a, b\}$, we only need to specify the content of the first element of the output sequence, as runs of the two elements must alternate. For example, the run-length decoding of $(4, 1, 3, 2)$  beginning with $a$ is $(a, a, a, a, b, a, a, a, b, b)$.

Mathematicians have taken an interest in run-length decodings over alphabets of two positive integers $\{p, q\}$. As the output sequence also consists of positive integers, the decoding operation can be iterated, and this produces sequences with interesting properties. 

Most famously, the Kolakoski sequence\footnote{Also known as the Oldenburger-Kolakoski sequence\cite{oldenburger}}\cite{kolakoski} is the infinite sequence defined as the fixed-point of run-length decoding(or equivalently, run-length encoding) over $\{ 1, 2\}$ starting with 1. It begins:

$$ 1, 2, 2, 1, 1, 2, 1, 2, 2, 1, 2, 2, 1, 1, 2, 1, 1, 2, 2, 1, 2, 1, 1, ...  $$

In this paper we will be concerned with sequences which are the product of a finite number $n$ of iterations of run-length decoding over $\{p,q\}$, which we refer to as $(n, p,q)$-\emph{differentiable}. These are of interest both as models of the structure of fixed-point sequences --- as fixed-point sequences are in the range of $n$-th iterated decoding for all $n$ --- and in their own right. It turns out that all $(n,p,q)$-differentiable sequences are governed by a naturally associated automaton\footnote{This automaton was first described, to the best of my knowledge, by Shen\cite{shen}. A very closely related automaton was described by Chv{\'a}tal\cite{chvatal}} $A_n^{p,q}$. We now turn to describing this automaton.

\section{The Automaton $A_n^{p,q}$ }

To understand how the automaton arises, let's first consider the structure of an arbitrary $n$-fold iterated run-length decoding(which we will sometimes abbreviate as RLD) over the alphabet $\{p, q\}$. 

\begin{definition}
Let $\{p, q\}$ be a pair of positive integers. Let $x$ be an element of $\{p, q\}$ and $s_{1:m}$ a sequence of positive integers. Define the run-length decoding $RLD^{p,q}(x, s_{1:m})$ as the sequence of elements of $\{p, q\}$ beginning with $x$ with run-lengths $s_{1:m}$. 
\end{definition}

\begin{example}
Over $\{3, 4\}$, the run-length decoding of $(3, 2, 5, 1)$ starting with $4$ is 
$$RLD^{3,4}(4, (3, 2, 5, 1)) = (4, 4, 4, 3, 3, 4, 4, 4, 4, 4,4, 3)$$
\end{example}

\begin{definition}
Let $x_{1:n}$ be a sequence of $n$ elements of $\{p, q\}$ and $s_{1:m}$ be an input sequence. We define the $n$-iterated run-length decoding $RLD^{p,q}_n(x_{1:n}, s_{1:m})$ of $s_{1:m}$ recursively as follows: 
$$RLD^{p,q}_1 (x_1, s_{1:m}) = RLD^{p,q}(x_1, s_{1:m}) $$
$$RLD^{p,q}_n (x_{1:n}, s_{1:m}) = RLD^{p,q}(x_n, RLD^{p,q}_{n-1}(x_{1:n-1}, s_{1:m}))$$
\end{definition}

\begin{example}

We compute the 3-iterated run-length decoding $RLD^{3, 2}_3((2, 3, 2), (1, 2))$.

$$RLD^{3, 2}(2, (1, 2)) = (2, 3, 3)$$
$$RLD^{3,2}(3, (2, 3, 3)) = (3, 3, 2, 2, 2, 3, 3, 3,)$$ 
$$RLD^{3, 2}(2,(3, 3, 2, 2, 2, 3, 3, 3) ) = (2, 2, 2, 3, 3, 3, 2, 2, 3, 3, 2, 2, 3, 3, 3, 2, 2, 2, 3, 3, 3) $$
\end{example}

The structure of iterated RLDs may seem very complicated in general. However, there is a natural way of ``breaking them down'' into component pieces. Say we have an input sequence $s_{1:m}$ and start-element sequence $x_{1:n}$ . Observe that after proceeding partway through said input sequence, say to $s_{1:k}$, the output sequences at every level must alternate from the last element of $\{p, q\}$ in the sequence so far to its other element, giving a new vector $y_{1:n}$. We can then consider the iterated run-length decoding $RLD^{p,q}_n(y_{1:n}, s_{k+1:m})$ and concatenate its output to our existing output. The transition function from $x_{1:n}$ to $y_{1:n}$ given $s_{1:k}$ is the automaton.

If this seems cryptic, try looking at examples \ref{ex:automaton} and \ref{ex:automaton2} below. But first, let's define our terms more carefully.

\begin{definition}[Opp and OppEnd]
Given $\{p, q\}$, define $\textnormal{Opp}(p) = q$ and $\textnormal{Opp}(q) = p$.
Given a sequence of elements of $\{p,q\}$, define $\textnormal{OppEnd}(s_{1:m}) = \textnormal{Opp}(s_m)$. 
Example: For $\{3, 4\}$, $\textnormal{OppEnd}(3, 4, 3, 3, 4) = 3$.
\end{definition}

\begin{definition}[The Automaton $A^{p,q}_n$]
Given $p, q$ and $n$ we define an automaton $A^{p,q}_n$ with input alphabet $\{p, q\}$ with states of the form $x_{1:n}$, sequences of $n$ elements of $\{p, q\}$. The transition function on input $s$ is defined as follows:
$$(A(x_{1:n}, s))_i = \textnormal{OppEnd}(RLD^{p,q}_i(x_{1:i}, s))$$
\end{definition}

The key proposition that allows us to break down iterated run-length decodings in terms of $A^{p,q}_n$ is as follows:

\begin{proposition} \label{prop:automatonprop}
Given an input sequence $s_{1:m}$, for any $k \in \{1,...,m\}$ we have 
$$RLD^{p,q}_n(x_{1:n}, s_{1:m}) = RLD^{p,q}_n(x_{1:n}, s_{1:k}) \circ RLD^{p,q}_n(A(x_{1:n}, s_{1:k}), s_{k+1:m})$$
where $\circ$ denotes string concatenation.

\end{proposition}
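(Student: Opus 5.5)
Our plan is to prove the statement by induction on $n$, after first establishing it for a single run-length decoding. The transition function in the definition of $A^{p,q}_n$ is written for a single input symbol $s$. We will read $A(x_{1:n}, s_{1:k})$ through the same formula with the whole prefix in place of $s$, that is, $(A(x_{1:n}, s_{1:k}))_i = \textnormal{OppEnd}(RLD^{p,q}_i(x_{1:i}, s_{1:k}))$. Part of the argument is checking that this agrees with iterating the single-symbol transition; this consistency will fall out of the splitting identity itself, applied with $k$ replaced by smaller prefixes.

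\emph{Base case $n=1$.} The key observation is that $RLD^{p,q}(x, s_{1:m})$ consists of runs of lengths $s_1, \dots, s_m$ whose contents alternate, starting with $x$. The first $k$ runs form exactly $RLD^{p,q}(x, s_{1:k})$. The content of run $k+1$ is the opposite of the content of run $k$, which is $\textnormal{OppEnd}(RLD^{p,q}(x, s_{1:k}))$. So the remaining runs form $RLD^{p,q}(\textnormal{OppEnd}(RLD^{p,q}(x,s_{1:k})), s_{k+1:m})$, and this is the claim for $n=1$. One subtlety needs checking here: the runs are ``runs'' only in the sense of the decoding blocks. Adjacent blocks always have different content, so the blocks genuinely are maximal runs, but the argument never needs maximality, only the block description.

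\emph{Inductive step.} Let $y = A(x_{1:n}, s_{1:k})$, and note that $y_{1:n-1} = A(x_{1:n-1}, s_{1:k})$ by definition. By the inductive hypothesis,
$$RLD_{n-1}(x_{1:n-1}, s_{1:m}) = U \circ V, \qquad U = RLD_{n-1}(x_{1:n-1}, s_{1:k}), \quad V = RLD_{n-1}(y_{1:n-1}, s_{k+1:m}).$$
We then apply the $n=1$ splitting lemma with start $x_n$ to the sequence $U \circ V$, splitting after the prefix $U$. This gives
$$RLD(x_n, U \circ V) = RLD(x_n, U) \circ RLD(\textnormal{OppEnd}(RLD(x_n,U)), V).$$
Here $RLD(x_n, U) = RLD_n(x_{1:n}, s_{1:k})$, and $\textnormal{OppEnd}(RLD(x_n,U)) = y_n$ by the definition of $A$. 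Hence the second piece is $RLD(y_n, RLD_{n-1}(y_{1:n-1}, s_{k+1:m})) = RLD_n(y_{1:n}, s_{k+1:m})$.

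The base case is elementary. The main obstacle is notational: making precise that $A$ applied to a word is given by the prefix formula, and handling the edge case $k=m$, where $s_{k+1:m}$ is empty and the empty decoding is the empty string. To settle consistency with the single-letter transition, we would show $A(A(x, s_{1:k}), s_{k+1}) = A(x, s_{1:k+1})$. For this we apply the splitting identity at every level $i$ and observe that the last element of a concatenation with a nonempty tail is the last element of the tail.
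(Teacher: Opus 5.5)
Your proof is correct, and it is essentially the paper's argument made rigorous. The paper gives no formal proof, only the informal observation that after the prefix $s_{1:k}$ each level must resume alternating from the opposite of its last symbol, together with worked examples; your one-level splitting lemma plus induction on $n$ (splitting $RLD^{p,q}(x_n, U\circ V)$ at $|U|$) carries that observation through all levels, and your treatment of the prefix reading of $A$ on words, the empty tail when $k=m$, and the consistency $A(A(x,s_{1:k}),s_{k+1})=A(x,s_{1:k+1})$ supplies details the paper leaves implicit.
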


By iterating proposition 1, we obtain 

\begin{corollary} \label{corr:automaton}
Given an input sequence $s_{1:m}$, we have 
$$RLD^{p,q}_n(x_{1:n}, s_{1:m}) = \prod_{i=1}^m RLD^{p,q}_n(y_i, s_i)$$
where $y_1 = x_{1:n}$ and $$y_i = A(y_{i-1}, s_{i-1})$$
for $1 < i \leq m$
\end{corollary}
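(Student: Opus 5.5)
We prove Corollary \ref{corr:automaton} by induction on the length $m$ of the input sequence, applying Proposition \ref{prop:automatonprop} with $k=1$ at each step. One point must be fixed first: the automaton is defined on single input symbols, and we extend it to words in the usual way, $A(x, s_{1:k}) = A(A(x, s_{1:k-1}), s_k)$. For Proposition \ref{prop:automatonprop} to agree with the recursion $y_i = A(y_{i-1}, s_{i-1})$, this extension must coincide with the direct description "OppEnd of each level after decoding $s_{1:k}$". I will take this as the intended meaning of $A(x_{1:n}, s_{1:k})$ in Proposition \ref{prop:automatonprop}. If needed, it follows by applying the proposition itself at each level $i \le n$: the last element of $RLD^{p,q}_i(x_{1:i}, s_{1:k})$ is the last element of the final block $RLD^{p,q}_i(A(x,s_{1:k-1})_{1:i}, s_k)$.

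\textbf{Base case.} For $m=1$ the product has the single factor $RLD^{p,q}_n(y_1, s_1)$ with $y_1 = x_{1:n}$, so the identity holds trivially.

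\textbf{Inductive step.} Assume the identity for all inputs of length $m-1$, and apply Proposition \ref{prop:automatonprop} with $k=1$:
$$RLD^{p,q}_n(x_{1:n}, s_{1:m}) = RLD^{p,q}_n(x_{1:n}, s_1) \circ RLD^{p,q}_n(A(x_{1:n}, s_1), s_{2:m}).$$
Apply the induction hypothesis to the second factor, with starting state $y_2 = A(y_1, s_1)$ and input $s_{2:m}$. This gives $\prod_{i=2}^m RLD^{p,q}_n(y_i', s_i)$, where $y_2' = y_2$ and $y_i' = A(y_{i-1}', s_{i-1})$. This is exactly the sequence $y_i$ from the statement. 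Concatenating with the first factor yields the claimed product, where $\prod$ denotes ordered concatenation.

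\textbf{Main obstacle.} The only delicate point is the consistency remark above: that the word-extension of $A$ used in Proposition \ref{prop:automatonprop} agrees with iterating the single-letter transitions. Everything else is bookkeeping of indices, since the $k=1$ case of the proposition peels off one factor at a time.
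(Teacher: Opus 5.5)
Your proof is correct and is the paper's argument made explicit: the paper gets the corollary simply ``by iterating Proposition~\ref{prop:automatonprop}'', and your induction on $m$ does this by peeling off one factor at a time with $k=1$, with the hypothesis implicitly quantified over all starting states. Since you only ever invoke the proposition with $k=1$, only single-letter transitions $A(y,s_1)$ appear, so the word-extension consistency issue you flag as the main obstacle never actually arises.
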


Corollary \ref{corr:automaton} shows that we can understand arbitrary $n$-iterated run-length decodings in terms of walks through the state-space of $A^{p,q}_n$ together with the $2^{n+1}$ strings $RLD_n^{p,q}(x_{1:n},s)$. The transition beginning at state $x$ on input $s$ emits the sequence $RLD^{p,q}_n(x, s)$ and proceeds to the new state $y = A^{p,q}_n(x, s)$.\footnote{As an illustration of the power of this approach: Chv{\'a}tal\cite{chvatal} proved fairly tight bounds on the density of 1s in the Kolakoski sequence by bounding their density in a related graph.}

It's probably easiest to see the truth of Proposition \ref{prop:automatonprop} and Corollary \ref{corr:automaton} by looking at some examples.

\begin{example} \label{ex:automaton}

Consider the 3-iterated (1,2)-run-length decoding of (1, 2, 1, 2) with starting sequence (2, 1, 2): 

\[
\begin{tabular}{c c c c c c c c c c c c}
    \textit{1} &   & \textit{2} &  &  &  \textit{1}  &   &  &  & \textit{2}  &   & \\
    \underline{2} &   & \underline{1} &  & 1&  \underline{2}  &   &  &  & \underline{1}  & 1 & \\
    \underline{1} & 1 & \underline{2} &  & 1&  \underline{2}  &   & 2&  & \underline{1}  & 2 & \\
    \underline{2} & 1 & \underline{2} & 2& 1&  \underline{2}  & 2 & 1& 1& \underline{2}  & 1 & 1
\end{tabular}
\]

States of the automaton are underlined and the input sequence is italicized. Here we have \\ $A^{1,2}_3(212, 1) = (1,2,2)$, $A^{1,2}_3(122, 2) = (2,2,2)$ and  $A^{1,2}_3(222, 1) = (1,1,2)$. The overall run-length decoding \\ $RLD^{1,2}_3((2,1,2), (1,2,1,2))$ is equal to the concatenation of $RLD^{1,2}_3(212, 1)$, $RLD^{1,2}_3(122, 2)$, $RLD^{1,2}_3(222, 1)$, $RLD^{1,2}_3(112, 2)$.

\end{example}

\begin{example} \label{ex:automaton2}
    Here's the 3-iterated (1,3)-RLD of the input sequence
    (1, 3, 1, 1) with starting sequence (3, 1, 3).

    \[
\begin{tabular}{c c c c c c c c c c c c c c c c c}
    \textit{1}    &   &   &    \textit{3} &    &   &   &    &  &   & \textit{1}     &   &   &\textit{1}    &   & \\
    \underline{3} &   &   & \underline{1} &    &   & 1 & 1 &   &   & \underline{3}  &   &   & \underline{1}&   & \\
    \underline{1} & 1 & 1 & \underline{3} &    &   & 1 & 3 &   &   & \underline{1}  & 1 & 1 & \underline{3}&   &  \\
    \underline{3} & 1 & 3 & \underline{1} & 1  & 1 & 3 & 1 & 1 & 1 & \underline{3}  & 1 & 3 & \underline{1}& 1 & 1
\end{tabular}
\]

\end{example}

\begin{remark}
Notice that in example \ref{ex:automaton2}, every element of the state of the automaton switches on every input. It's fairly easy to see that this always occurs for automata $A^{p,q}_n$ with both $p$ and $q$ odd; while for automata with both $p$ and $q$ even, the state never changes. This makes the structure of $(n,p,q)$-differentiable sequences easy to understand in these cases. Hence in the remainder of this paper we will focus on pairs $\{p,q\}$ with $p + q$ odd. 
\end{remark}

\begin{figure}[h]

\centering
\begin{tikzpicture}[
    >=stealth,
    node/.style={
        draw,
        rectangle,
        minimum width=0.95cm,
        minimum height=0.95cm,
        align=center,
        inner sep=2pt,
        font=\itshape
    },
    edge/.style={->, thick},
    lab/.style={font=\bfseries, fill=white, inner sep=1pt}
]

\node[node] (111) at (0,6) {$\begin{array}{c}1\\1\\1\end{array}$};
\node[node] (222) at (8,6) {$\begin{array}{c}2\\2\\2\end{array}$};
\node[node] (112) at (8,0) {$\begin{array}{c}1\\1\\2\end{array}$};
\node[node] (221) at (0,0) {$\begin{array}{c}2\\2\\1\end{array}$};

\node[node] (211) at (2.5,4.1) {$\begin{array}{c}2\\1\\1\end{array}$};
\node[node] (122) at (5.5,4.1) {$\begin{array}{c}1\\2\\2\end{array}$};
\node[node] (212) at (5.5,1.9) {$\begin{array}{c}2\\1\\2\end{array}$};
\node[node] (121) at (2.5,1.9) {$\begin{array}{c}1\\2\\1\end{array}$};

\draw[edge] (111) -- node[lab, above] {$1$} (222);
\draw[edge] (222) -- node[lab, right] {$1$} (112);
\draw[edge] (112) -- node[lab, below] {$1$} (221);
\draw[edge] (221) -- node[lab, left] {$1$} (111);

\draw[edge] (211) -- node[lab, left] {$1$} (121);
\draw[edge] (121) -- node[lab, left] {$1$} (212);
\draw[edge] (212) -- node[lab, right] {$1$} (122);
\draw[edge] (122) -- node[lab, right] {$1$} (211);

\draw[edge] (111) -- node[lab, above right] {$2$} (211);
\draw[edge] (211) -- node[lab, below left] {$2$} (111);

\draw[edge] (222) -- node[lab, above left] {$2$} (122);
\draw[edge] (122) -- node[lab, below right] {$2$} (222);

\draw[edge] (112) -- node[lab, below left] {$2$} (212);
\draw[edge] (212) -- node[lab, above right] {$2$} (112);

\draw[edge] (221) -- node[lab, above left] {$2$} (121);
\draw[edge] (121) -- node[lab, below right] {$2$} (221);

\end{tikzpicture}

\caption{Illustration of $A_3^{1,2}$. Edges are labeled by the input inducing that transition.}
\end{figure}

\vspace{1em}

Clearly the structure of $A^{p,q}_n$ is key for understanding $(n, p, q)$-differentiable sequences. The problem is that $A^{p,q}_n$ is itself rather inscrutable. As a first step forwards, we will rely on the following key property. Note that henceforth we will omit $p,q$ from $A^{p,q}_n$ when clear from context. 

\begin{lemma}
$A_n^{p,q}$ is a permutation automaton. Equivalently, the function $A_n^{p,q}(_, s)$ is invertible for all sequences $s$.
    
\end{lemma}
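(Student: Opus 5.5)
The plan is to reduce to single input symbols and then show that, for a fixed symbol $s \in \{p,q\}$, the map $x_{1:n} \mapsto A(x_{1:n}, s)$ is ``triangular'': each output coordinate is the corresponding input coordinate, possibly flipped by $\textnormal{Opp}$, where whether it is flipped depends only on the earlier input coordinates. Such a map is a bijection of the finite set $\{p,q\}^n$, since it can be inverted coordinate by coordinate.

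\textbf{Reduction to single symbols.} By Proposition \ref{prop:automatonprop} and Corollary \ref{corr:automaton}, the state reached after reading $s_{1:k}$ is obtained by applying the single-symbol transitions $A(\cdot, s_1), A(\cdot, s_2), \dots$ in turn. Indeed, the last element of each level's output for the whole prefix is the last element of the final block, so the $\textnormal{OppEnd}$ values agree. Hence $A(\cdot, s_{1:k})$ is a composition of the maps $A(\cdot, s_j)$, and a composition of bijections is a bijection. It therefore suffices to treat a single symbol $s$.

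\textbf{Single symbol.} Write $L_i(x_{1:i})$ for the length of $RLD_i(x_{1:i}, s)$, and set $L_0 = 1$ for the length of the input $(s)$. The sequence $RLD_i(x_{1:i}, s) = RLD(x_i, RLD_{i-1}(x_{1:i-1}, s))$ consists of exactly $L_{i-1}$ runs. These runs alternate starting from $x_i$, so the last element is $x_i$ if $L_{i-1}$ is odd and $\textnormal{Opp}(x_i)$ if it is even. Consequently
$$A(x_{1:n}, s)_i = \begin{cases} \textnormal{Opp}(x_i) & \text{if } L_{i-1}(x_{1:i-1}) \text{ is odd},\\ x_i & \text{if } L_{i-1}(x_{1:i-1}) \text{ is even}.\end{cases}$$
The length $L_{i-1}$ is determined by $x_{1:i-1}$ alone, since it is the length of $RLD_{i-1}(x_{1:i-1}, s)$. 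In particular $y_1 = \textnormal{Opp}(x_1)$ always.

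\textbf{Inversion.} Given $y = A(x, s)$, recover $x$ inductively. First, $x_1 = \textnormal{Opp}(y_1)$. Once $x_{1:i-1}$ is known, compute $L_{i-1}(x_{1:i-1})$ and set $x_i = \textnormal{Opp}(y_i)$ or $x_i = y_i$ according to its parity. This defines a two-sided inverse, so $A(\cdot, s)$ is a permutation of $\{p,q\}^n$.

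I expect no serious obstacle. The only point needing care is the run-count claim: the level-$i$ output has exactly $L_{i-1}$ runs, and its last element is governed by the parity of that count. This holds because every run length is a positive integer, so no run is empty, and consecutive runs have different content.
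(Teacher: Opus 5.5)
Your proof is correct and is essentially the paper's argument unrolled. The paper inducts on $n$ using $A_n(x,s) = (x_1 \text{ or } \textnormal{Opp}(x_1),\ A_{n-1}(x_{2:n}, RLD(x_1,s)))$: it recovers $x_1$ from the parity of $|s|$, then the remaining coordinates from the inductive hypothesis applied to the input $RLD(x_1,s)$, which is your coordinate-by-coordinate inversion with flips governed by the parity of $|RLD_{i-1}(x_{1:i-1},s)|$. Your reduction to single symbols, and with it the reliance on Proposition~\ref{prop:automatonprop} (which the paper only illustrates by example), is unnecessary: the triangular argument works verbatim for an arbitrary sequence $s$ once you set $L_0 = |s|$.
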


\begin{proof}
We will proceed by induction in $n$. Now for the base case we have

\[
A_1(x, s) = 
\begin{cases}
x, &  \text{if $|s|$ is even}, \\

(p+q) - x, & \text{if  $|s|$  is odd}
\end{cases}
\]
Clearly this is invertible.

Now, for the inductive step, we have 
\[
A_n(x, s) = 
\begin{cases}
(x_1,  A_{n-1}(x_{2:n}, RLD(x_1, s))),  &  \text{if $|s|$ is even}, \\

((p+q) - x_1,  A_{n-1}(x_{2:n}, RLD(x_1, s))) & \text{if  $|s|$  is odd}
\end{cases}
\]

Which can thus be inverted as follows(the inverse being taken in $x$):
\[
A_n^{-1}(x, s) = 
\begin{cases}
(x_1,  A_{n-1}^{-1}(x_{2:n}, RLD(x_1, s))),  &  \text{if $|s|$ is even}, \\

((p+q) - x_1,  A_{n-1}^{-1}(x_{2:n}, RLD((p + q) - x_1, s))) & \text{if  $|s|$  is odd}
\end{cases}
\]

\end{proof}

Now since $A_n^{p,q}$ is a permutation automaton, there is an associated permutation group consisting of all possible permutations of its states caused by an input sequence. We denote this group by $\mathcal{K}_n^{p,q}$. As it turns out, this group has a natural recursive structure which we turn to in the next section.

\section{The Transformation Groups $\mathcal{K}_n^{p,q}$ and $\mathcal{J}_n^{p,q}$}

We begin with an observation: The first $k$ digits of the image $A(x, s)$ are determined by the first $k$ digits of $x$. This means that transformations of the automaton's state can be represented in a tree: that is, does the first digit get flipped? Given that the first digit is $p$, does the second get flipped? Given that the first digit is $p$ and second $q$, does the third digit get flipped? etc.

In group-theoretic terms, this means that the transformation group $\mathcal{K}_n$ can be considered a subgroup of the $n$-th iterated wreath product of $\mathbb{Z}_2$, $ \mathbb{Z}_2 \wr ... \wr \mathbb{Z}_2$, equivalently, the group of automorphisms of a binary tree of depth $n$, $Aut(T_{n+1})$. We denote elements of this group by tuples $(f, A, B)$ where $f \in \mathbb{Z}_2$ and $A, B \in Aut(T_{n})$. The first element denotes if the given automorphism flips the root, and $A, B$ are the mappings of the left and right subtrees. Elements of this group can be represented as trees of depth $n$ with elements of $\mathbb{Z}_2$ at the roots and leaves. They have the multiplication rule:

$$(f, A, B) (g, C, D) =  \begin{cases}
    (f+g, AC, BD) & \text{if $f = 0$}\\
    (f+g, AD, BC) & \text{if $f = 1$}
     
\end{cases}$$

Now, as mentioned, $\mathcal{K}_n^{p,q}$ has a recursive structure, which manifests by it being a subgroup of another group $\mathcal{J}_n^{p,q}$. The structure of $\mathcal{J}_n^{p,q}$ is rather complicated, so to motivate it we first discuss it in informal terms in the next section.

\subsection{Informal Description}

Let's consider what elements of $\mathcal{K}_n^{p,q}$ represent. An element of the group is generated by a string $s$ of $p$s and $q$s. Its action on a string $x$ of length $n$ is determined by the parity of the length of its successive run-length-decodings on each prefix of the string -- that is, is the string $s$ of odd length? Is its $RLD$ with $x_1$ of odd length? Its $RLD$ with $x_1, x_2$? etc. We can arrange the possible $RLD$s in a tree; the parity of the length of the strings in this tree determine the group element. Here's an example with $s = 12$.

\begin{center}
\vspace{1em}
\begin{forest}
    [ 12
    [122  [12211 [1221121] [2112212] ][21122 [11212211] [22121122] ]]
    [211  [1121 [12112] [21221] ][2212 [1122122] [2211211] ]]
    ]
\end{forest}
\end{center}

This group element would send $1212 \rightarrow 1122$ for instance, as $|12| = 2$, $|122| = 3$, $|21122| = 5$ and $|11212211| = 8$.

What can we say about the structure of these trees? One simple observation is that the length-parity of immediate siblings is always equal, as they are both equal to the parity of the sum of elements in their parent. It turns out that considering the sums is useful in general. Here's the tree of sums of the above element:

\begin{center}
\vspace{1em}
\begin{forest}
    [ 3
    [ 5  [7 [10] [11] ][8 [11] [13] ]]
    [ 4  [5 [7] [8] ][7 [11] [10] ]]
    ]
\end{forest}

\end{center}

There are certain relations that hold among sums at different parts of the tree. If we have a part of the tree as follows:

\begin{center}
\vspace{1em}
\begin{forest}
    [ $T_1$ [$T_2$] [$T_3$] ]
\end{forest}
\end{center}

Then we have the relation:

\begin{equation}\label{rel1}
\Sigma(T_1) = \frac{\Sigma(T_2) + \Sigma(T_3)}{p+q}
\end{equation}

where $\Sigma$ denotes the sum of elements in a string. This holds because any element $m$ of $T_1$ creates a run of $m$ $p$s in one child and $m$ $q$s in the other.

A more complex relation holds among the \emph{differences} between siblings. Consider the difference $\Sigma(T_2) - \Sigma(T_3)$. $T_2$ and $T_3$ are equal to each other under exchanging $p$s and $q$s. Thus any run of $m$ $p$s in $T_2$ contributes a total $(p-q)m$ to the difference. Now, the runs of $T_2$ and $T_3$ are controlled by the elements of $T_1$. An element $m$ of $T_1$ contributes a run of $m$ $p$s to $T_2$ and $m$ $q$s to $T_3$ if it is in an odd position in $T_1$; vice versa if it is in an even position. Thus we have the identity:

\[ \Sigma(T_2) - \Sigma(T_3) = (p-q)(\Sigma_{\text{odd}} (T_1) - \Sigma_{\text{even}} (T_1)) \]

Let's consider the quantity $\Sigma_{\text{odd}} (T_1) - \Sigma_{\text{even}} (T_1)$. Now, every element of $T_1$ is either $p$ or $q$, and every element is either in an odd or even position. Assume $|T_1|$ is even. This means that if there is a surplus of $p$s on odd positions -- say, $\alpha$ additional $p$s -- there must be an exactly opposite surplus of $q$s on even positions. Thus the sum can be rewritten as $(\alpha p - \alpha q)(p-q)$.
If $|T_1|$ is odd there will be an additional $p$ or $q$ at the end, contributing positively to the sum. Let's write this extra $p$ or $q$ as $v$. So in total the difference is $\delta(|T_1|  \text{odd})(p-q)v + (\alpha p - \alpha q)(p-q)$.\footnote{Here $\delta$ is 1 if the enclosed condition is true, 0 otherwise. Later we will somewhat abuse notation and sometimes use it to mean 1 if an enclosed element is the nontrivial flip}

This may seem like a pointless rearrangement. But its utility becomes apparent if we consider a depth-3 subtree:

\begin{center}
\vspace{1em}
\begin{forest}
    [ $T_1$ 
    [$T_2$ [$T_4$] [$T_5$]] 
    [$T_3$ [$T_6$] [$T_7$]] ]
\end{forest}

\end{center}

From the above we have that $\Sigma(T_4) - \Sigma(T_5) = \delta(|T_2| \text{odd})(p-q) v + (\alpha p - \alpha q)(p - q)$, where $\alpha$ is the surplus of $p$s on odd positions of $T_2$. Similarly $\Sigma(T_6) - \Sigma(T_7) = \delta(|T_3| \text{odd})(p-q)w + (\beta p - \beta q)(p - q)$

Now consider that $T_3$ is simply $T_2$ with $p$ and $q$ interchanged. This means that a surplus of $p$s in $T_2$ becomes a surplus of $q$s in $T_3$, so $\beta = -\alpha$. Additionally, $|T_2| = |T_3|$, and the final element is swapped, so $v = (p+q) - w$. Thus we have the relation

\begin{equation}\label{rel2}
    \Sigma(T_4) - \Sigma(T_5) = \Sigma(T_7) - \Sigma(T_6) + \delta(|T_2| \text{odd})(p-q)(p+q)
\end{equation}

We can use this equation together with \ref{rel1} to derive 

\begin{equation}\label{rel3}
    \Sigma(T_1) = \frac{2(\Sigma(T_4) + \Sigma(T_6)) - \delta(|T_2| \text{odd})(p+q)(p-q)}{(p+q)^2}
\end{equation}

You may be wondering how these equations relate to the group structure. The group structure is entirely determined by the parity of the length of the sequences in the tree. Now, the parity of the length at one level is determined by the parity of the sum of the parent node. So merely from knowing the parity, we can derive the sum mod 2 of nodes immediately above. By using equation \ref{rel3} we can iterate this process: if we consider this equation mod $2^n$, its value is determined by the values of $\Sigma(T_4)$ and $\Sigma(T_6)$ mod $2^{n-1}$. So from knowledge of the sums mod $2^{n-1}$ at a given level, we can determine the sums mod $2^n$ two levels above. Thus from knowledge of the parities of the lengths of sequences at each node, we can infer the sums mod $2^{\lfloor n/2\rfloor}$ at height $n$. We can place the (sum, length-parity) tuples in a tree as shown below(the bottom row being only length-parities).

\begin{center}
\vspace{1em}
\begin{forest}
    [ {$(\mathbb{Z}_4, \mathbb{Z}_2)$}
    [{$(\mathbb{Z}_2, \mathbb{Z}_2)$}  [{$(\mathbb{Z}_2, \mathbb{Z}_2)$}  [{$\mathbb{Z}_2$}] [$\mathbb{Z}_2$] ][{$(\mathbb{Z}_2, \mathbb{Z}_2)$} [$\mathbb{Z}_2$] [$\mathbb{Z}_2$] ]]
    [{$(\mathbb{Z}_2, \mathbb{Z}_2)$}  [{$(\mathbb{Z}_2, \mathbb{Z}_2)$}  [{$\mathbb{Z}_2$}] [$\mathbb{Z}_2$] ][{$(\mathbb{Z}_2, \mathbb{Z}_2)$} [$\mathbb{Z}_2$] [$\mathbb{Z}_2$] ]]
    ]
\end{forest}
\end{center}

Thus, every group element has an associated tree in which the sums obey the relations \ref{rel1}, \ref{rel2}, \ref{rel3}, and length-parities are equal to the parity of their parent's sum. The set of elements of $Aut(T_{n+1})$ associated with these trees forms a group $\mathcal{J}_n^{p,q}$, of which $\mathcal{K}_n^{p,q}$ is a subgroup(and in fact, based on numerical evidence it seems extremely likely that $\mathcal{J}_n^{p,q} = \mathcal{K}_n^{p,q}$)
To prove this, we need to describe the group more formally.

\subsection{Group-Theoretic Description}

First, let's fix some notation. As mentioned, $\mathcal{K}_n^{p, q}$ and  $\mathcal{J}_n^{p, q}$ are subgroups of the nth-iterated wreath product of $\mathbb{Z}_2$, aka $Aut(T_{n+1})$. Given an element $g \in Aut(T_{n+1})$, we denote the left, right and root-flip sub-elements by $g_L$, $g_R$ and $g_f$ respectively. We may also chain these subscripts, denoting the left-right branch of a group element by $g_{LR}$, etc. We denote the dihedral group of order $2m$ by $D_{2m}$, and denote elements of this group by tuples $(f, x)$, where $f \in \mathbb{Z}_2$ and $x \in \mathbb{Z}_m$. We pick out elements of this tuple by $d_f$, $d_x$. We use the convention that $(f_1, x_1)(f_2, x_2) = (f_1+f_2, x_1+(-1)^{f_1}x_2).$ 

We can characterize the group $\mathcal{K}_n^{p, q}$ as follows. For the rest of this section, we assume that $p+q$ is odd.

\begin{definition}
The group $\mathcal{K}_n^{p,q}$ is the subgroup of $Aut(T_{n+1})$ generated by the following recursively defined elements

$$[p]_n = (1, [p]_{n-1}^p, [q]_{n-1}^p)$$
$$[q]_n = (1, [p]_{n-1}^q, [q]_{n-1}^q)$$

Where $[p]_1$ and $[q]_1$ are both defined as $(1)$ (the non-trivial element of $Aut(T_2)$)

\end{definition}

The definition of $\mathcal{J}_n^{p,q}$ is more complicated. We inductively define a series of groups $\mathcal{J}_n \subset Aut(T_{n+1})$,  maps $\psi: \mathcal{J}_n \rightarrow \mathbb{Z}_{2^{\lfloor n/2 \rfloor}}$ and $\Delta: \mathcal{J}_n \rightarrow D_{2^{\lfloor n/2 \rfloor+1}}$.

\begin{definition}
For n = 1, we define:

$\mathcal{J}_1^{p,q} = \mathbb{Z}_2$

$\psi_1(g) = 0$

$\Delta_1(g) = (g_f, 0)$

\end{definition}
\begin{definition}
    
For n = 2, we define:

$\mathcal{J}_2^{p,q}$ is the subgroup of elements $g \in Aut(T_3)$ such that $g_L = g_R$.

$\psi_2(g) = \delta(g_{Lf} = 1)$

$\Delta_2(g) = (g_f, \psi_2(g))$

\end{definition}

\begin{definition}
    Define the family of involutions of groups 
    $\Phi:D_{2m} \rightarrow D_{2m}$
    by 
    $$\Phi((f, x)) = (f, \delta(f=1) (p+q)(p-q) - x)$$
\end{definition}

\begin{definition}\label{group}

    For $n \geq 3$, we define $\mathcal{J}_n^{p,q}$ to be the set of elements $g \in Aut(T_{n+1})$ such that $g_L, g_R \in \mathcal{J}_{n-1}^{p,q}$, and

    $$\Delta_{n-1}(g_L) = \Phi(\Delta_{n-1}(g_R) )$$

\end{definition}

\begin{definition}
    For $n \geq 3$, we define $\psi_n:\mathcal{J}_n^{p,q} \rightarrow \mathbb{Z}_{2^{\lfloor n/2 \rfloor}}$ 
    by 
    
    $$\psi_n(g) = \begin{cases}

    \dfrac{\psi_{n-1}(g_L) + \psi_{n-1}(g_R)}{p+q} & \text{for odd $n$}\\
    
    \dfrac{2(\psi_{n-2}(g_{LL}) + \psi_{n-2}(g_{RL})) - \delta(g_{Lf}=1)(p+q)(p-q)}{(p+q)^2} & \text{for even $n$}

    \end{cases}
    $$
\end{definition}

\begin{definition}
    For $n \geq 3$, we define $\Delta_n:\mathcal{J}_n^{p,q} \rightarrow D_{2^{\lfloor n/2 \rfloor+1}}$ by 
    $$ \Delta_n(g) = \begin{cases} 

     (g_f, \psi_{n-1}(g_L) - \psi_{n-1}(g_R))& \text{for odd $n$} \\
    (g_f, (p+q)\psi_n(g) - 2\psi_{n-1}(g_R)) & \text{for even $n$}
    
    \end{cases}$$
\end{definition}

Note that given the above, definition \ref{group} is equivalent to the following statement for even $n$:
\begin{equation}\label{defin}
    \psi_{n-2}(g_{LL}) - \psi_{n-2}(g_{LR}) = \psi_{n-2}(g_{RR}) - \psi_{n-2}(g_{RL}) + \delta(g_{Lf}=1)(p+q)(p-q)
\end{equation}
together with the condition that $g_{Lf} = g_{Rf}$

Now we must verify that these definitions form a group:

\begin{theorem}
    Using the definitions above, we have
    
    (a) $\mathcal{J}_n ^{p,q}$ is a group
    
    (b) $\psi_n$ is a homomorphism

    (c) $\Delta_n$ is a homomorphism

\end{theorem}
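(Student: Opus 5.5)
We prove (a), (b), (c) simultaneously by strong induction on $n$. The base cases are direct checks. $\mathcal{J}_1=\mathbb{Z}_2$ is a group, $\psi_1=0$ is trivially a homomorphism, and $\Delta_1(g)=(g_f,0)$ lands in the subgroup $\{(f,0)\}\cong\mathbb{Z}_2$ of $D_4$. For $n=2$, the set $\{g: g_L=g_R\}$ is closed under products and inverses: writing $g=(f,A,A)$ and $h=(f',C,C)$, the product is $(f+f',AC,AC)$ whichever branch of the multiplication rule applies. The map $\psi_2(g)=g_{Lf}$ is additive because the root-flip of $AC$ is $A_f+C_f$. For $\Delta_2$ we compute $\Delta_2(g)\Delta_2(h)=(f+f',\,A_f+(-1)^f C_f)$ in $D_4$. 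This equals $(f+f',A_f+C_f)$ because $-1\equiv 1 \pmod 2$.

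For the inductive step, assume (a)–(c) for all indices below $n$. The central tool is a computation of how $\Phi$ interacts with products. Write $c=(p+q)(p-q)$. Since $\Phi(f,x)=(f,fc-x)$, one checks that $\Phi(d_1d_2)=\Phi(d_1)\Phi(d_2)$: both sides equal $(f_1+f_2,\ (f_1+f_2)c-x_1-(-1)^{f_1}x_2)$ once $f_1c+(-1)^{f_1}f_2c\equiv (f_1+f_2)c$ is verified. This congruence uses that $c$ is odd, working modulo the appropriate power of $2$, so that $f_1+f_2$ is read in $\mathbb{Z}_2$; this point needs care. So $\Phi$ is an involutive automorphism. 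One also checks $\Phi(d^{-1})=\Phi(d)^{-1}$.

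Now take $g,h\in\mathcal{J}_n$. The subtrees of $gh$ are $g_Lh_L$ and $g_Rh_R$ if $g_f=0$, and $g_Lh_R$ and $g_Rh_L$ if $g_f=1$. In the first case, the inductive homomorphism property of $\Delta_{n-1}$ together with $\Phi$ being a homomorphism gives the defining condition directly. In the swapped case we need $\Delta(g_Lh_R)=\Phi(\Delta(g_Rh_L))$. This follows by substituting $\Delta(g_L)=\Phi\Delta(g_R)$ and $\Delta(h_R)=\Phi\Delta(h_L)$, which is equivalent to $\Delta(h_L)=\Phi\Delta(h_R)$ because $\Phi$ is an involution. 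Inverses and the identity are handled the same way, so (a) holds.

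For (b) and (c), split by the parity of $n$ and by $g_f$. For odd $n$, $\psi_n$ is additive because $\psi_{n-1}$ is, and swapping the two summands does not change their sum. Division by $p+q$ is legitimate since $p+q$ is odd and hence a unit modulo powers of $2$. There is also a bookkeeping question of moduli: $\psi_{n-1}$ takes values in $\mathbb{Z}_{2^{\lfloor (n-1)/2\rfloor}}$, and we must check that $\psi_n$ is well defined in $\mathbb{Z}_{2^{\lfloor n/2\rfloor}}$. For odd $n$ the two moduli coincide. For even $n$ the factor $2$ in front of $\psi_{n-2}$ lifts the modulus by one power of $2$, exactly as in the informal discussion.

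For $\Delta_n$ with odd $n$, the second coordinate $\psi_{n-1}(g_L)-\psi_{n-1}(g_R)$ changes sign when $g_f=1$ swaps the subtrees of $h$. This matches the dihedral rule $x_1+(-1)^{f_1}x_2$. For even $n$, $\psi_n$ involves $\psi_{n-2}(g_{LL})+\psi_{n-2}(g_{RL})$ and $\delta(g_{Lf}=1)$. The relation \eqref{defin} and $g_{Lf}=g_{Rf}$ allow us to rewrite this sum symmetrically, so that it is invariant under the relevant swaps. This is where the correction term $\delta(g_{Lf}=1)c$ must cancel correctly: the $\delta$ is not additive over $\mathbb{Z}$ but is additive mod $2$, and the $(-1)^f$ twists in the group law reconcile the difference.

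I expect this even case to be the main obstacle: verifying that $\psi_n$ and $\Delta_n(g)=(g_f,(p+q)\psi_n(g)-2\psi_{n-1}(g_R))$ are homomorphisms when $g_f$ or $g_{Lf}$ equals $1$. Here I would expand the products at depth two, using the wreath-product multiplication twice, and use \eqref{defin} to replace $\psi(g_{LR})$-type terms by $\psi(g_{RL})$-type terms. The remaining step is to track the correction terms $c\,\delta(\cdot)$ carefully modulo $2^{\lfloor n/2\rfloor+1}$.
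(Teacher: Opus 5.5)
Your architecture is the paper's: induction on $n$; part (a) from $\Delta_{n-1}$ being a homomorphism and $\Phi$ being an involutive automorphism; and the odd-$n$ cases of (b) and (c) by the same sum and sign-change argument. You check explicitly that $\Phi$ is a homomorphism, which the paper uses tacitly. Note, though, that for $f_i\in\{0,1\}$ the identity $f_1c+(-1)^{f_1}f_2c=\delta(f_1+f_2=1)\,c$ holds for every integer $c$, so oddness of $c$ plays no role there.

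The gap is the even-$n$ case of (b) and (c), which is the only non-formal part of the theorem. You name the right tool, \eqref{defin}, but you announce the computation rather than carry it out, and the mechanism you describe is not the one that works. Since $\psi_n$ lands in an abelian group, no $(-1)^f$ twist can absorb the non-additivity of $\delta$. Also, rewriting $\psi_n$ symmetrically via \eqref{defin} only holds modulo $2^{n/2-1}$. It therefore discards exactly the top bit of $\psi_n\in\mathbb{Z}_{2^{n/2}}$, which is where $\delta$ matters.

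What is needed is a direct computation. Because $g_{Lf}=g_{Rf}$ and $h_{Lf}=h_{Rf}$, we have $(gh)_{Lf}=g_{Lf}+h_{Lf}$ whatever $g_f$ is. The $h$-factors of $(gh)_{LL}$ and $(gh)_{RL}$ are, in some order, $h_{LL},h_{RL}$ if $g_{Lf}=0$, but $h_{LR},h_{RR}$ if $g_{Lf}=1$. In the latter case \eqref{defin} applied to $h$ gives
\[
2\bigl(\psi_{n-2}(h_{LR})+\psi_{n-2}(h_{RR})\bigr)=2\bigl(\psi_{n-2}(h_{LL})+\psi_{n-2}(h_{RL})\bigr)-2\delta(h_{Lf}=1)(p+q)(p-q).
\]
This extra term combines with $-\delta(g_{Lf}+h_{Lf}=1)(p+q)(p-q)=-(1-\delta(h_{Lf}=1))(p+q)(p-q)$. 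The result is exactly the $-(1+\delta(h_{Lf}=1))(p+q)(p-q)$ required by $\psi_n(g)+\psi_n(h)$.

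For $\Delta_n$ with $g_f=1$, the point is that $(gh)_R=g_Rh_L$ brings in $\psi_{n-1}(h_L)$. The dihedral law instead demands $-\bigl((p+q)\psi_n(h)-2\psi_{n-1}(h_R)\bigr)$. These agree iff $(p+q)\psi_n(h)\equiv\psi_{n-1}(h_L)+\psi_{n-1}(h_R)\pmod{2^{n/2-1}}$. That congruence again follows from \eqref{defin}; it is the content of Lemma~\ref{mod}. Neither identity appears in your proposal, and without them (b) and (c) are unproved for even $n$.

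Separately, the rotation part of $D_{2^{\lfloor n/2\rfloor+1}}$ is $\mathbb{Z}_{2^{\lfloor n/2\rfloor}}$, since the paper's $D_{2m}$ has order $2m$. So the correction terms must be tracked modulo $2^{\lfloor n/2\rfloor}$, not modulo $2^{\lfloor n/2\rfloor+1}$ as you propose. At the larger modulus, $2\psi_{n-1}(g_R)$ (even $n$) and $\psi_{n-1}(g_L)-\psi_{n-1}(g_R)$ (odd $n$) are not even well defined. Likewise, $\Delta_1$ lands in $D_2$, not $D_4$.
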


\begin{proof}
    We proceed by induction. The claims are clear for $n=1,2$. 
    
    \vspace{1em}
    
    (a) Proof $\mathcal{J}_n^{p,q}$ is a group: If we have two elements $g, h   \in \mathcal{J}_n^{p,q}$, then  $gh$ is either equal to $(g_f+ h_f, g_Lh_L,g_Rh_R)$ or $(g_f+ h_f, g_Lh_R,g_Rh_L)$, depending on whether $g_f$ is 0 or 1, respectively. In the first case we have $\Phi(\Delta(g_L h_L)) = \Phi(\Delta(g_L)) \Phi(\Delta(h_L)) =\Delta(g_R h_R)$, in the second $\Phi(\Delta(g_L h_R)) = \Phi(\Delta(g_L)) \Phi(\Delta(h_R)) = \Delta(g_R h_L)$ (because $\Phi$ is an involution). So $gh$ is in $\mathcal{J}_n^{p,q}$

    As for inverses, if $\Delta(g_L) = \Phi(\Delta(g_R))$ we have $\Delta(g_L^{-1}) = \Phi(\Delta(g_R^{-1}))$ and $\Delta(g_R^{-1}) = \Phi(\Delta(g_L^{-1}))$, this is enough to show $g^{-1} \in \mathcal{J}_n^{p,q}$

    \vspace{1em}
    
    (b.i) Proof $\psi_n$ is a homomorphism, $n$ odd: $$\psi_n (gh) = \frac{\psi_{n-1}((gh)_L) + \psi_{n-1}((gh)_R)}{p+q} = \frac{\psi_{n-1}(g_L) + \psi_{n-1}(g_R) + \psi_{n-1}(h_L) + \psi_{n-1}(h_R)}{p+q} = \psi_n(g) + \psi_n(h)$$

    (b.ii) Proof $\psi_n$ is a homomorphism, $n$ even. We consider two further sub-cases: $g_{Lf} = 0$ and $g_{Lf} = 1$
    
    (b.ii.i) $g_{Lf} = 0$. In this case we have

    $$\psi_n(gh) = \frac{2(\psi_{n-2}((gh)_{LL}) + \psi_{n-2}((gh)_{RL})) - \delta(h_{Lf})(p+q)(p-q) }{(p+q)^2}$$

    $$ = \frac{2(\psi_{n-2}(g_{LL}) + \psi_{n-2}(h_{LL}) +  \psi_{n-2}(g_{RL}) + \psi_{n-2}(h_{RL})) - \delta(h_{Lf})(p+q)(p-q) }{(p+q)^2}$$

    $$ = \psi_n(g) + \psi_n(h)$$

    (b.ii.ii) $g_{Lf} = 1$. In this case we have
    $$\psi_n(gh) = \frac{2(\psi_{n-2}((gh)_{LL}) + \psi_{n-2}((gh)_{RL})) - (1 - \delta(h_{Lf}))(p+q)(p-q) }{(p+q)^2}$$
    
    $$ = \frac{2(\psi_{n-2}(g_{LL}) + \psi_{n-2}(h_{LR}) +  \psi_{n-2}(g_{RL}) + \psi_{n-2}(h_{RR})) - (1 - \delta(h_{Lf}))(p+q)(p-q) }{(p+q)^2}$$
    By equation \ref{defin}, this is 
    $$  \frac{2(\psi_{n-2}(g_{LL}) + \psi_{n-2}(h_{LL}) +  \psi_{n-2}(g_{RL}) + \psi_{n-2}(h_{RL}) - \delta(h_{Lf})(p+q)(p-q)) - (1 - \delta(h_{Lf}))(p+q)(p-q)) }{(p+q)^2}$$

    $$ = \frac{2(\psi_{n-2}(g_{LL}) + \psi_{n-2}(h_{LL}) +  \psi_{n-2}(g_{RL}) + \psi_{n-2}(h_{RL})) - (1 + \delta(h_{Lf}))(p+q)(p-q)) }{(p+q)^2}$$

    $$= \psi_n(g) + \psi_n(h)$$
    
    \vspace{1em}

    (c.i) Proof $\Delta_n$ is a homomorphism. We consider subcases $n$ odd or even, and $g_f = 0$ or $1$. 

    (c.i.i) $n$ odd, $g_f = 0$. We have
    $$\Delta_n(gh) = ((gh)_f, \psi_{n-1}((gh)_L) - \psi_{n-1}((gh)_R))$$
    $$ = (h_f, \psi_{n-1}(g_L) - \psi_{n-1}(g_R) + \psi_{n-1}(h_L)  - \psi_{n-1}(h_R)) = \Delta_n(g)\Delta_n(h)$$

    (c.i.ii) $n$ odd, $g_f = 1$. We have 
    $$\Delta_n(gh) = ((gh)_f, \psi_{n-1}((gh)_L) - \psi_{n-1}((gh)_R))$$
    $$ = (1 + h_f, \psi_{n-1}(g_L) - \psi_{n-1}(g_R) - (\psi_{n-1}(h_L) - \psi_{n-1}(h_R)))$$
    $$ = \Delta_n(g)\Delta_n(h)$$

    (c.ii.ii). $n$ even, $g_f = 0$. We have 
    $$\Delta_n(gh) = ((gh)_f, (p+q)\psi_{n}(gh) - 2\psi_{n-1}((gh)_R))$$
    $$= (h_f, (p+q)(\psi_{n}(g) + \psi_n(h)) - 2(\psi_{n-1}(g_R) + \psi_{n-1}(h_R)))$$
    $$= \Delta_n(g)\Delta_n(h)$$

    (c.ii.ii) $n$ even, $g_f = 1$. We have
    $$\Delta_n(gh) = ((gh)_f, (p+q)\psi_{n}(gh) - 2\psi_{n-1}((gh)_R))$$
    \begin{equation}\label{lala} = (1+h_f, (p+q)(\psi_{n}(g) + \psi_n(h)) - 2(\psi_{n-1}(g_R) + \psi_{n-1}(h_L)))\end{equation}
    Now consider the sum $(p+q)\psi_n(h) - 2\psi_{n-1}(h_L)$. As $n$ is even this is equal to 
    $$\frac{2(\psi_{n-2}(h_{LL}) + \psi_{n-2}(h_{RL})) - \delta(h_{Lf})(p+q)(p-q)}{p+q} - 2\frac{\psi_{n-2}(h_{LL}) + \psi_{n-2}(h_{LR})}{p+q}$$
    $$=\frac{2(\psi_{n-2}(h_{RL}) - \psi_{n-2}(h_{LR})) - \delta(h_{Lf})(p+q)(p-q)}{p+q}$$
    By equation \ref{defin}, this is equal to
    $$\frac{2(\psi_{n-2}(h_{RR}) - \psi_{n-2}(h_{LL})) + \delta(h_{Lf})(p+q)(p-q)}{p+q}$$
    $$ = 2\psi_{n-1}(h_R) - (p+q)\psi_n(h)$$
    Substituting this into \ref{lala} we have 
    $$\Delta_n(gh) = (1+h_f, (p+q)(\psi_n(g) - \psi_n(h)) - 2(\psi_{n-1}(g_R) - \psi_{n-1}(h_R)))$$
    $$ = \Delta_n(g)\Delta_n(h)$$

\end{proof}

The following lemma is useful:

\begin{lemma} \label{mod}

For all $n \geq 3$, we have $ \psi_n(g) = (\psi_{n-1}(g_L) + \psi_{n-1}(g_R))/(p+q) \mod{2^{\lfloor (n-1)/2 \rfloor}}$
    
\end{lemma}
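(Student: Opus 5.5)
For odd $n$ the statement is exactly the definition of $\psi_n$, with the congruence taken modulo $2^{\lfloor n/2\rfloor}=2^{\lfloor (n-1)/2\rfloor}$. So only even $n\ge 4$ needs proof; $n=3$ is odd. For even $n$ we have $\lfloor (n-1)/2\rfloor = n/2-1$. Since $p+q$ is odd, all divisions by $p+q$ are multiplications by its inverse modulo a power of $2$. The plan is to expand both sides in terms of $\psi_{n-2}$ of the grandchildren and compare modulo $2^{n/2-1}$.

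\emph{Expanding the right-hand side.} Here $n-1$ is odd, so by definition
\[
\psi_{n-1}(g_L)+\psi_{n-1}(g_R) = \frac{\psi_{n-2}(g_{LL})+\psi_{n-2}(g_{LR})+\psi_{n-2}(g_{RL})+\psi_{n-2}(g_{RR})}{p+q}
\]
modulo $2^{n/2-1}$. The left side is an element of $\mathbb{Z}_{2^{\lfloor (n-1)/2\rfloor}}$, and the values $\psi_{n-2}(\cdot)$ live in $\mathbb{Z}_{2^{n/2-1}}$, so this identity is an honest equality in that ring. The right-hand side of the lemma is therefore
\[
\frac{\Sigma_4}{(p+q)^2}, \qquad \Sigma_4 := \psi_{n-2}(g_{LL})+\psi_{n-2}(g_{LR})+\psi_{n-2}(g_{RL})+\psi_{n-2}(g_{RR}).
\]

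\emph{Rewriting $\Sigma_4$ with equation \ref{defin}.} Since $g\in\mathcal{J}_n$ with $n$ even, equation \ref{defin} gives
\[
\psi_{n-2}(g_{LR})+\psi_{n-2}(g_{RR}) = \psi_{n-2}(g_{LL})+\psi_{n-2}(g_{RL}) - \delta(g_{Lf}=1)(p+q)(p-q)
\]
modulo $2^{n/2-1}$, the modulus in which $\psi_{n-2}$ takes values. Substituting, we get
\[
\Sigma_4 \equiv 2\bigl(\psi_{n-2}(g_{LL})+\psi_{n-2}(g_{RL})\bigr) - \delta(g_{Lf}=1)(p+q)(p-q).
\]
Dividing by $(p+q)^2$ gives exactly the even-$n$ formula for $\psi_n(g)$.

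\emph{The main obstacle: a well-definedness subtlety.} The even-$n$ formula for $\psi_n$ produces a value modulo $2^{n/2}$, one bit more than the precision of the $\psi_{n-2}$ inputs. That extra bit is presumably obtained by treating $2\psi_{n-2}$ as determined modulo $2^{n/2}$ from inputs known modulo $2^{n/2-1}$. Reducing this formula modulo $2^{n/2-1}$ just discards that extra bit. So the computation above, carried out in $\mathbb{Z}_{2^{n/2-1}}$, is legitimate, and the reduction of $\psi_n(g)$ agrees with $(\psi_{n-1}(g_L)+\psi_{n-1}(g_R))/(p+q)$ there.

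The write-up should make explicit that every identity used holds in $\mathbb{Z}_{2^{n/2-1}}$: the $\psi_{n-2}$ values and equation \ref{defin} hold at that precision, and the odd-level definition of $\psi_{n-1}$ holds at that precision as well. Once that is checked, the rest is the routine substitution above.
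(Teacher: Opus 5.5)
Your proof is correct and matches the paper's. The odd case is the definition. For even $n$ you use equation \ref{defin} to identify $\Sigma_4/(p+q)^2$ with the even-$n$ formula for $\psi_n$, and then recognize $\Sigma_4/(p+q)^2$ as $(\psi_{n-1}(g_L)+\psi_{n-1}(g_R))/(p+q)$; the paper does the same thing, only run in the opposite direction. Your remark that every identity holds in $\mathbb{Z}_{2^{n/2-1}}$, and that reducing the even-$n$ formula modulo $2^{n/2-1}$ just drops the extra bit obtained from doubling, spells out a precision point the paper leaves implicit.
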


\begin{proof}
    For odd $n$, this follows trivially from the definition of $\psi_n$. For even $n$, by \ref{defin} we have 
    $$\psi_n(g) = \frac{\psi_{n-2}(g_{LL}) + \psi_{n-2}(g_{LR}) +\psi_{n-2}(g_{RL}) +\psi_{n-2}(g_{RR})}{(p+q)^2} \mod 2^{\lfloor (n-1)/2 \rfloor}$$
    $$ = \frac{\psi_{n-1}(g_L) + \psi_{n-1}(g_{R})}{(p+q)} \mod 2^{\lfloor (n-1)/2 \rfloor}$$
\end{proof}

\subsection{$\mathcal{K}_n \subset \mathcal{J}_n$}

Having verified that the definition of $\mathcal{J}_n$ is well-formed, proving $\mathcal{K}_n$ is a subgroup is relatively easy. For this, it is enough to prove $[p]_n, [q]_n \in \mathcal{J}_n$.

For the purposes of the proof, is it convenient to introduce an element $y_n = (0, [p]_{n-1}, [q]_{n-1})$. Note that $[p]_n = y^p f$ and $[q]_n = y^q f$ (where we abuse notation somewhat by denoting by $f$ the element of $\mathcal{J}_n$ flipping the root).

\begin{theorem}

For all $n > 0$, we have 

(a) $y_n, [p]_n, [q]_n \in \mathcal{J}_n$

(b) $\psi_n(y_n) = 1$, $\psi_n([p]_n) = p$, $\psi_n([q]_n) = q$

(c) $\Delta_n(y_n) = (0,(p-q))$, $\Delta_n([p]_n) = (1, p(p-q))$, $\Delta_n([q]_n) = (1,q(p-q))$.

\end{theorem}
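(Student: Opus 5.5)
Induction on $n$, proving (a), (b), (c) together for all three elements. Reduce to $y_n$ first: the root flip $f=(1,e,e)$ lies in $\mathcal{J}_n$ since $\Delta_{n-1}(e)=(0,0)=\Phi((0,0))$, with $\psi_n(f)=0$. Its image is $\Delta_n(f)=(1,0)$: for odd $n$ the $x$-component is $\psi_{n-1}(e)-\psi_{n-1}(e)=0$, and for even $n$ it is $(p+q)\cdot 0-2\cdot 0=0$. Because $[p]_n=y_n^pf$ and $[q]_n=y_n^qf$, and $\psi_n,\Delta_n$ are homomorphisms by the previous theorem, the claims for $[p]_n,[q]_n$ follow from those for $y_n$. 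Indeed $\psi_n([p]_n)=p\psi_n(y_n)=p$, and $\Delta_n([p]_n)=(0,p-q)^p(1,0)=(0,p(p-q))(1,0)=(1,p(p-q))$ under the stated dihedral multiplication convention. The same holds with $q$.

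So the work is to show $y_n\in\mathcal{J}_n$ and compute $\psi_n(y_n)$ and $\Delta_n(y_n)$, where $y_n=(0,[p]_{n-1},[q]_{n-1})$. The base cases $n=1,2$ are checked directly from the definitions. For $n=2$, $y_2=(0,(1),(1))$ has equal subtrees, so $y_2\in\mathcal{J}_2$, with $\psi_2=1$ and $\Delta_2=(0,1)$. Since $p+q$ is odd, $p-q\equiv 1 \pmod 2$, which matches $(0,p-q)$ in $D_4$.

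For $n\ge 3$, membership requires $\Delta_{n-1}([p]_{n-1})=\Phi(\Delta_{n-1}([q]_{n-1}))$. By the inductive hypothesis this reads $(1,p(p-q))=\Phi((1,q(p-q)))=(1,(p+q)(p-q)-q(p-q))$, which is an identity. The $\psi$ value for odd $n$ is immediate:
\[
\psi_n(y_n)=\frac{\psi_{n-1}([p]_{n-1})+\psi_{n-1}([q]_{n-1})}{p+q}=\frac{p+q}{p+q}=1,
\]
with division legitimate because $p+q$ is odd, hence invertible mod powers of $2$. Then $\Delta_n(y_n)=(0,p-q)$ directly.

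The even case is the main obstacle, since $\psi_n$ is defined through depth-two subtrees. Here $(y_n)_{LL}=([p]_{n-1})_L=[p]_{n-2}^p$ and $(y_n)_{RL}=([q]_{n-1})_L=[p]_{n-2}^q$. The flip $(y_n)_{Lf}$ is the root flip of $[p]_{n-1}$, which is $1$. This gives
\[
\psi_n(y_n)=\frac{2(p^2+pq)-(p+q)(p-q)}{(p+q)^2}=\frac{(p+q)^2}{(p+q)^2}=1.
\]
For $\Delta_n$ we get $(p+q)\cdot 1-2\psi_{n-1}([q]_{n-1})=p+q-2q=p-q$, with root flip $0$.

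Two points need checking. First, the modulus bookkeeping: the identities hold as integers, so they hold in every $\mathbb{Z}_{2^k}$. Second, the inductive hypothesis must supply $\psi$ values two levels down, so the induction should be a strong induction on $n$. Finally, the base case $n=3$ must be handled with care, because $\mathcal{J}_2$ uses the special definitions of $\psi_2$ and $\Delta_2$. I expect these to check out, since $(0,1)$ and $(1,p)$, $(1,q)$ in $D_4$ are consistent with the formulas mod $2$.
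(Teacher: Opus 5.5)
Your proof is correct and follows the paper's argument. It uses induction, obtains $y_n\in\mathcal{J}_n$ from $\Phi((1,q(p-q)))=(1,p(p-q))$, computes $\psi_n(y_n)$ and $\Delta_n(y_n)$ directly in the odd and even cases, and passes to $[p]_n=y_n^pf$ and $[q]_n=y_n^qf$ via the homomorphism property. Your explicit checks on the root flip $f$, on the need for strong induction (the even case draws on level $n-2$), and on the moduli make precise points the paper leaves implicit.
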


\begin{proof}

We again proceed by induction. The claims are fairly obvious for $n=1, 2$.

(a) By (c) $\Delta_{n-1}([p]_{n-1}) = (1, p(p-q))$. $\Phi((1, p(p-q))) = (1, q(p-q)) = \Delta_{n-1}([q]_{n-1}) $. Thus $y_n \in \mathcal{J}_n$. Since $f \in \mathcal{J}_n$, this implies that $[p]_n = y^p f$ and $[q]_n = y^q f$ are also.

(b) For odd $n$, we have that $\psi_n(y_n) = (\psi_{n-1}([p]_{n-1}) + \psi_{n-1}([q]_{n-1}))/(p+q) = 1$. 

For even $n$, $$\psi_n(y_n) = \frac{2(\psi_{n-2}([p]_{n-2}^p) +\psi_{n-2}([p]_{n-2}^q)) - \delta([p]_{(n-1)f})(p+q)(p-q)}{(p+q)^2}$$
$$=\frac{2(p^2 + pq) - (p+q)(p-q)}{(p+q)^2} = 1 $$

This implies $\psi_n([p]_n) = \psi_n(y)^p\psi_n(f) = p$ and $\psi_n([q]_n) = \psi_n(y)^q\psi_n(f) = q$.

(c) For odd $n$, $\Delta_n(y_n) = (0, \psi_{n-1}([p]_{n-1}) - \psi_{n-1}([q]_{n-1})) = (0, p-q)$
For even $n$, $\Delta_n(y_n) = (0, (p+q)\psi_n(y_n) - 2\psi_{n-1}([q]_{n-1})) = (0, p-q)$.
This implies $\Delta_n([p]_n) = \Delta_n(y_n)^p\Delta_n(f) = (0,p(p-q))(1,0)=(1,p(p-q))$ and $\Delta_n([q]_n) = \Delta_n(y_n)^q\Delta_n(f) = (0,q(p-q))(1,0)=(1,q(p-q))$

\end{proof}

Now, does the reverse inclusion hold? Based on evidence in the next section, it seems very likely that it does. Thus we venture the following:

\begin{conjecture}\label{conj}
    $\mathcal{J}_n^{p,q} = \mathcal{K}_n^{p,q}$
\end{conjecture}
    
\subsection{Order of $\mathcal{J}_n$}

The order of $\mathcal{J}_n$ can be computed. Comparing to finite $n$ computations of $\mathcal{K}_n^{p,q}$ provides numerical evidence for conjecture \ref{conj}.

\begin{lemma}
$\Delta_{n}$ is surjective on $D_{2^{\lfloor n/2 \rfloor + 1}}$
\end{lemma}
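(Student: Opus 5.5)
Since $\Delta_n$ is a homomorphism by the theorem above, its image is a subgroup of $D_{2^{\lfloor n/2\rfloor+1}}$, so it is enough to exhibit elements of $\mathcal{J}_n$ whose images generate the dihedral group. The natural candidates come from the theorem showing $\mathcal{K}_n\subset\mathcal{J}_n$: there $\Delta_n(y_n)=(0,p-q)$, and the root flip $f$ satisfies $\Delta_n(f)=(1,0)$.

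The rotation subgroup $\mathbb{Z}_{2^{\lfloor n/2\rfloor}}$ of $D_{2^{\lfloor n/2\rfloor+1}}$ is cyclic of $2$-power order. Since $p+q$ is odd, $p-q$ is odd as well, so $p-q$ is a unit modulo $2^{\lfloor n/2\rfloor}$. Hence $(0,p-q)$ generates all rotations. Together with the reflection $(1,0)$, this gives every element of the dihedral group, so $\Delta_n(\langle y_n,f\rangle)=D_{2^{\lfloor n/2\rfloor+1}}$.

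The only things to check are the two input facts.

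First, $\Delta_n(f)=(1,0)$. Here $f=(1,e,e)$ with both subtrees trivial. Clearly $f\in\mathcal{J}_n$, because $\Phi$ fixes the identity $(0,0)$ and so the defining condition of Definition~\ref{group} holds. The value of $\Delta_n(f)$ then follows from the definitions: $\psi$ of the identity is $0$ since $\psi_{n-1}$ is a homomorphism, and for even $n$ one has $\psi_n(f)=0$ directly because $f_{Lf}=0$.

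Second, $\Delta_n(y_n)=(0,p-q)$. This is part (c) of the previous theorem, which holds for all $n>0$ and includes the base cases $n=1,2$. For $n=1$ the target group is $D_4$ with rotation part $\mathbb{Z}_2$, and $f$ alone gives $(1,0)$. I would also verify $n=1$ directly: $\Delta_1$ has image $\{(0,0),(1,0)\}$ and the codomain should be read as $D_2\cong\mathbb{Z}_2$, since $2^{\lfloor 1/2\rfloor}=1$.

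I don't expect a real obstacle here. The one delicate point is the oddness of $p-q$: this is exactly where the standing assumption that $p+q$ is odd enters, since without it $(0,p-q)$ would only generate a proper subgroup of the rotations.
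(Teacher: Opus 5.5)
Your proof is correct and is essentially the paper's argument: $\Delta_n(y_n)=(0,p-q)$ with $p-q$ odd generates all rotations, and $\Delta_n(f)=(1,0)$ supplies the reflection; your checks that $f\in\mathcal{J}_n$ and $\Delta_n(f)=(1,0)$ fill in details the paper leaves implicit. One small slip: in the paper's convention the $n=1$ codomain $D_{2^{\lfloor 1/2\rfloor+1}}$ is $D_2$ (order $2$, trivial rotation part), not $D_4$, as your very next sentence correctly states.
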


\begin{proof}
    $\Delta_n(y_n) = (0,p-q)$. $p-q$ is odd, so this generates the subgroup $(0,x)$. And $\Delta_n(f) = (1,0)$. This is enough to generate $D_{2^{\lfloor n/2 \rfloor 1}}$.
\end{proof}

\begin{lemma}
    $|\mathcal{J}_n| = 2|\mathcal{J}_{n-1}|^2/|D_{2^{\lfloor(n-1)/2 \rfloor+1}}|$
\end{lemma}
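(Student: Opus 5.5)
We prove the formula by counting the elements of $\mathcal{J}_n$ directly from its recursive description in Definition \ref{group}, for $n \geq 3$. An element $g \in \mathcal{J}_n$ is a triple $(g_f, g_L, g_R)$. Here $g_f \in \mathbb{Z}_2$ is arbitrary, and $g_L, g_R \in \mathcal{J}_{n-1}$ are subject only to the condition $\Delta_{n-1}(g_L) = \Phi(\Delta_{n-1}(g_R))$. So
$$|\mathcal{J}_n| = 2 \cdot \#\{(a,b) \in \mathcal{J}_{n-1}^2 : \Delta_{n-1}(a) = \Phi(\Delta_{n-1}(b))\},$$
and it remains to count these pairs.

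We count the pairs by fibering over $b$. Fix $b \in \mathcal{J}_{n-1}$. The admissible $a$ are exactly the preimage $\Delta_{n-1}^{-1}(\Phi(\Delta_{n-1}(b)))$. By part (c) of the theorem verifying the definitions, $\Delta_{n-1}$ is a homomorphism, and by the preceding lemma it is surjective onto $D := D_{2^{\lfloor (n-1)/2\rfloor+1}}$. Every fiber of a surjective homomorphism is a coset of the kernel, so each fiber is nonempty of size $|\mathcal{J}_{n-1}|/|D|$. We need $\Phi$ to map $D$ into $D$, which it does since it is an involution of $D$. Hence each $b$ contributes exactly $|\mathcal{J}_{n-1}|/|D|$ choices of $a$. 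Summing over $b$ gives $|\mathcal{J}_{n-1}|^2/|D|$ pairs, and multiplying by 2 for $g_f$ yields the claimed formula.

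The main point needing care is bookkeeping rather than mathematics. First, we must check that the modulus of the dihedral group matches: $\Delta_{n-1}$ lands in $D_{2^{\lfloor (n-1)/2\rfloor+1}}$, which is the group in the statement. Second, we must confirm that surjectivity of $\Delta_{n-1}$ is available at index $n-1$. For $n-1 \geq 2$ it follows from the lemma, whose proof uses only $\Delta(y)=(0,p-q)$ with $p-q$ odd and $\Delta(f)=(1,0)$. At $n-1=1$, $\Delta_1$ maps onto $\{(f,0)\} = D_2$, and the count still works. Finally, the formula should also be checked against the base definitions for $n=2$. There $\mathcal{J}_2$ is defined by $g_L=g_R$, giving $|\mathcal{J}_2| = 2\cdot 2 = 4 = 2\cdot 2^2/|D_2|$, consistent with the general formula. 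We note this separately, since Definition \ref{group} only governs $n\geq 3$.
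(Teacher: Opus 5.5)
Your proposal is correct and follows essentially the same argument as the paper: $g_f$ is free, one child is free, and the other child ranges over a fiber of the surjective homomorphism $\Delta_{n-1}$, which has size $|\mathcal{J}_{n-1}|/|D_{2^{\lfloor(n-1)/2\rfloor+1}}|$. The differences are cosmetic. The paper fixes $g_L$ and counts $g_R$, whereas you fix $g_R$ and count $g_L$; you also spell out the coset-of-kernel reasoning and the $n=2$ boundary check, which the paper leaves implicit.
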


\begin{proof}
Consider an element $g$ of $\mathcal{J}_n$. Given the definition of $\mathcal{J}$, we may enumerate elements by firstly choosing an element $g_f \in \mathbb{Z}_2$, then an element $g_L$, then an element $g_R$ subject to $\Phi(\Delta_{n-1}(g_L)) = \Delta_{n-1}(g_R)$. $g_f$ and $g_L$ can be chosen freely, contributing 2 and $|\mathcal{J}_{n-1}|$ to the product, while there are  $|\mathcal{J}_{n-1}|/|D_{2^{\lfloor(n-1)/2 \rfloor+1}}|$ choices for $g_R$ conditional on $g_L$(since $\Delta_{n-1}(\mathcal{J}_{n-1}) = D_{2^{\lfloor(n-1)/2 \rfloor+1}}$). This implies the lemma.

\end{proof}

\begin{corollary}
    $|\mathcal{J}_n| = 2^{\lceil(2^n+3n)/6\rceil}$. Equivalently, $|\mathcal{J}_n| = 2^{g(n)}$, where $$g(n) = \begin{cases}
        \frac{2^n + 3n + 1}{6} & \text{for odd $n$} \\
        \frac{2^n + 3n + 2}{6} & \text{for even $n$}
    \end{cases}$$
    \end{corollary}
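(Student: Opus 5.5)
Write $|\mathcal{J}_n| = 2^{g(n)}$ and take $\log_2$ of the recursion in the preceding lemma. Since $|D_{2m}| = 2m$, we have $|D_{2^{\lfloor(n-1)/2\rfloor+1}}| = 2^{\lfloor(n-1)/2\rfloor+2}$. Therefore
\[
g(n) = 1 + 2g(n-1) - \lfloor (n-1)/2 \rfloor - 2 = 2g(n-1) - \lfloor (n-1)/2\rfloor - 1 ,
\]
valid for $n \geq 3$. Both this lemma and the surjectivity of $\Delta_{n-1}$ used in its proof are available from the preceding results for $n-1 \geq 2$. The plan is to prove the closed form by induction on $n$, using this linear recurrence.

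For the base cases, $\mathcal{J}_1 = \mathbb{Z}_2$ gives $g(1) = 1 = (2+3+1)/6$. For $n=2$, $\mathcal{J}_2$ consists of the elements of $Aut(T_3)$ with $g_L = g_R$; it is parametrized by $g_f$ and $g_L \in Aut(T_2)$, so it has order $4$. This gives $g(2) = 2 = (4+6+2)/6$.

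For the inductive step, split by the parity of $n$. If $n$ is odd, then $n-1$ is even and $\lfloor (n-1)/2\rfloor = (n-1)/2$. Hence
\[
g(n) = \frac{2^{n} + 6(n-1) + 4}{6} - \frac{n-1}{2} - 1 = \frac{2^n + 3n + 1}{6}.
\]
If $n$ is even, then $n-1$ is odd and $\lfloor (n-1)/2 \rfloor = (n-2)/2$. Hence
\[
g(n) = \frac{2^{n} + 6(n-1) + 2}{6} - \frac{n-2}{2} - 1 = \frac{2^n+3n+2}{6}.
\]
Both computations are routine algebra.

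Finally, check that the piecewise formula equals $\lceil (2^n+3n)/6\rceil$. One way is to compute $2^n + 3n \bmod 6$. For odd $n$, $2^n \equiv 2 \pmod 6$ and $3n \equiv 3 \pmod 6$, so the sum is $\equiv 5$ and adding $1$ rounds up exactly. For even $n \geq 2$, $2^n \equiv 4 \pmod 6$ and $3n \equiv 0 \pmod 6$, so the sum is $\equiv 4$ and adding $2$ rounds up exactly. This also confirms that the numerators are divisible by $6$.

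The only real subtlety is bookkeeping: correctly evaluating $\lfloor (n-1)/2\rfloor$ in the order of the dihedral group, and making sure the recurrence is applied only for $n \geq 3$, where $\mathcal{J}_n$ is defined by Definition \ref{group}. There is no genuine obstacle beyond this.
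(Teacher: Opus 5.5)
Your strategy is the paper's: take $\log_2$ of the lemma's recursion and induct on $n$, split by parity. However, the recurrence you derive is wrong, and the two ``routine'' computations after it are false as written.

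Under the paper's convention, $D_{2m}$ is the dihedral group of order $2m$, with elements $(f,x)$ and $x\in\mathbb{Z}_m$. So $D_{2^{k+1}}$ has order $2^{k+1}$, not $2^{k+2}$; you substituted $m=2^{k+1}$ where you needed $2m=2^{k+1}$. This is consistent with $\Delta_{n-1}$ taking values $(g_f,x)$ with $x\in\mathbb{Z}_{2^{\lfloor (n-1)/2\rfloor}}$. The correct recurrence is therefore $g(n)=2g(n-1)+1-(\lfloor (n-1)/2\rfloor+1)=2g(n-1)-\lfloor (n-1)/2\rfloor$, without your extra $-1$.

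Your version already contradicts the target formula at $n=3$. It gives $g(3)=2\cdot 2-1-1=2$, whereas $|\mathcal{J}_3|=2^3$. Correspondingly, your displayed identities do not hold:
\begin{itemize}
\item for odd $n$, $\frac{2^n+6(n-1)+4}{6}-\frac{n-1}{2}-1=\frac{2^n+3n-5}{6}$;
\item for even $n$, $\frac{2^n+6(n-1)+2}{6}-\frac{n-2}{2}-1=\frac{2^n+3n-4}{6}$.
\end{itemize}
Each falls short of the claimed value by exactly $1$. Ironically, this is precisely the bookkeeping point you flagged as the only subtlety.

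The fix is to delete the $-1$, i.e.\ to use $|D_{2^{\lfloor (n-1)/2\rfloor+1}}|=2^{\lfloor (n-1)/2\rfloor+1}$. Then you get $\frac{2^n+6(n-1)+4}{6}-\frac{n-1}{2}=\frac{2^n+3n+1}{6}$ for odd $n$, and $\frac{2^n+6(n-1)+2}{6}-\frac{n-2}{2}=\frac{2^n+3n+2}{6}$ for even $n$. This is exactly the paper's computation, written there as $2g(n-1)+1-\frac{n+1}{2}$ and $2g(n-1)+1-\frac{n}{2}$.

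Your base cases are correct. Your mod-$6$ check that the piecewise formula equals $\lceil (2^n+3n)/6\rceil$ is also correct, and it is a useful addition that the paper leaves implicit.
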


\begin{proof} By induction. The claim is clear for $n = 1$. 

Case: $n$ even. By the lemma we have $g(n) = 2 \times g(n-1)+1-(n/2) = \frac{2^n + 6(n-1) + 2}{6} + 1 - \frac{n}{2} = \frac{2^n + 3n + 2}{6}$

Case: $n$ odd. By the lemma we have $g(n) = 2 \times g(n-1) + 1 - ((n+1)/2) = \frac{2^n + 6(n-1) + 4}{6} - \frac{n-1}{2} = \frac{2^n + 3n + 1}{6}$
    
\end{proof}

We can compare the orders of $\mathcal{K}_n^{p,q}$ using computer algebra systems(we used GAP). We computed $|\mathcal{K}_n^{1,2}|$ up to $n=12$. Here are the results:

\vspace{1em}
\begin{tabular}{| c c c c c c c c c c c c c |}
\hline
$n$ & 1 & 2 & 3 & 4 & 5 & 6 & 7 & 8 & 9 & 10 & 11 & 12 \\
\hline
$\log_2|\mathcal{J}_n|$ & 1 & 2 & 3 & 5 & 8 & 14 & 25 & 47 & 90 & 176 & 347 & 689 \\
\hline
$\log_2|\mathcal{K}_n^{1,2}|$ & 1 & 2 & 3 & 5 & 8 & 14 & 25 & 47 & 90 & 176 & 347 & 689 \\
\hline
\end{tabular}

As can be seen, the agreement is exact. Identical results were obtained for $\mathcal{K}_n^{3,2}$. Based on this, it seems quite plausible that $\mathcal{J}_n^{p,q} = \mathcal{K}_n^{p,q}$.

\subsection{Limit Groups}

We briefly describe a family of infinite groups $\mathcal{J}_\infty^{p,q} \subset Aut(T_\infty)$. Here $Aut(T_\infty)$ is the automorphism group of the infinite binary tree.
It seems likely that these groups are the inverse limits of the groups $\mathcal{J}_n^{p,q}$, although we have not formally verified this. We denote the 2-adic integers by $\widehat{\mathbb{Z}_2}$, and the dihedral 2-adic integers by $Dih(\widehat{\mathbb{Z}_2})$ (meaning by this the semidirect product of $\mathbb{Z}_2$ and $\widehat{\mathbb{Z}_2}$ with $\mathbb{Z}_2$ acting by inversion). We specify elements of $Aut(T_\infty)$ by functions $f:T_\infty \rightarrow\mathbb{Z}_2$.

\begin{definition}
    Define the involution of groups 
    $\Phi:Dih(\widehat{\mathbb{Z}_2}) \rightarrow Dih(\widehat{\mathbb{Z}_2})$
    by 
    $$\Phi((f, x)) = (f, \delta(f) (p+q)(p-q) - x)$$
\end{definition}

\pagebreak

\begin{definition}
An element $f \in Aut(T_\infty)$ is part of $\mathcal{J}_\infty^{p,q}$ iff there exist functions $\psi:T_\infty \rightarrow \widehat{\mathbb{Z}_2}$ and $\Delta: T_\infty \rightarrow Dih(\widehat{\mathbb{Z}_2})$ satisfying:

$$\psi(x) = \frac{\psi(x_L) + \psi(x_R)}{p+q}$$

$$\Delta(x) = (f(x), \psi(x_L) - \psi(x_R))$$

$$\Delta(x_L) = \Phi(\Delta(x_R))$$

for all $x \in T_\infty$

\end{definition}

$\mathcal{J}_\infty^{p,q}$ seems to be a weakly regular branch group in the sense of \cite{branch}.

\section{Orbits of $A_n^{p,q}$}

As an application of these ideas, here we derive a simple criterion determining the number of orbits of maximal length of an arbitrary input sequence to $A_n^{p,q}$ for odd $n$. This partially solves a conjecture of Shen\cite{shen}, who conjectured that $A_n^{1,2m}$ always has such an orbit with input 1 passing through $(1, 1, ...1)$. Here we prove that a maximal orbit exists, but not that it passes through $(1, 1,...,1)$.

\begin{lemma}
    All orbits of $A_n^{p,q}(-,s)$ have lengths that are powers of 2
\end{lemma}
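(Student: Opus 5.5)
The permutation $A_n^{p,q}(-,s)$ is an element of $\mathcal{K}_n^{p,q}$, which by construction is a subgroup of $Aut(T_{n+1})$, the $n$-fold iterated wreath product $\mathbb{Z}_2 \wr \cdots \wr \mathbb{Z}_2$. This group has order $2^{2^n-1}$, so it is a $2$-group. The orbits of $A_n^{p,q}(-,s)$ on the state set are the orbits of the cyclic subgroup $\langle g\rangle$, where $g = A_n^{p,q}(-,s)$. The orbit–stabilizer theorem gives that each orbit length divides $|\langle g\rangle|$. Since $|\langle g\rangle|$ divides $|Aut(T_{n+1})|$, it is a power of $2$, and every divisor of a power of $2$ is again a power of $2$.

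The only point to make precise is the identification of the states $x_{1:n} \in \{p,q\}^n$ with the leaves of the binary tree, so that $g$ genuinely acts through $Aut(T_{n+1})$. This follows from the observation at the start of Section 3: the first $k$ coordinates of $A(x,s)$ depend only on the first $k$ coordinates of $x$. Hence $g$ preserves the prefix (tree) structure. Combined with the Lemma that $A_n^{p,q}(-,s)$ is invertible, this makes $g$ a tree automorphism.

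Alternatively, one can argue directly by induction on $n$, using the recursive form of $A_n$ from the proof of that Lemma. On the first coordinate, $g$ acts as an element of $\mathbb{Z}_2$, so the orbit of $x_1$ has length $1$ or $2$. Suppose the orbit of $x_1$ has length $\ell$. Then $g^\ell$ fixes $x_1$ and acts on the fibre $\{x_1\}\times\{p,q\}^{n-1}$ as a permutation lying in $Aut(T_n)$. By induction, the orbits of $g^\ell$ on that fibre have power-of-two length $m$. The orbit of $x$ under $g$ then has length $\ell m$, which is again a power of $2$.

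I do not expect a real obstacle here. The only care needed is to justify that $g$ respects prefixes, and that is already established in the paper.
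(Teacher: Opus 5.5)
Your proof is correct, but your main argument takes a different route from the paper's. The paper inducts on $n$ by truncating from the bottom of the tree. The action of $s$ on the prefix $x_{1:n-1}$ is just $A_{n-1}(-,s)$, so its orbit has some power-of-two length $v$ by induction. The last coordinate is then fixed or flipped by $s^v$ according to the parity of $|RLD_{n-1}(x_{1:n-1},s^v)|$, so the orbit of $x$ has length $v$ or $2v$.

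Your primary argument uses only that $g$ lies in the $2$-group $Aut(T_{n+1})$ of order $2^{2^n-1}$, and then applies orbit--stabilizer. This is shorter and proves the more general fact that every tree automorphism has power-of-two cycle lengths. The automaton enters only through prefix-preservation. Your observation that prefix-determinacy plus bijectivity on $\{p,q\}^n$ forces bijections on every level $\{p,q\}^k$ is exactly what is needed to place $g$ in $Aut(T_{n+1})$.

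What the paper's induction buys is explicit information. Passing from level $n-1$ to level $n$, the orbit length either stays the same or doubles, and a length parity decides which. This is the same flip/parity bookkeeping that drives the later count of maximal-length orbits.

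Your alternative induction is essentially the paper's argument run top-down. You fix the first coordinate and recurse into the fibre, rather than truncating to $x_{1:n-1}$. The only care needed is in the induction hypothesis. On the fibre, $g^\ell$ acts on $x_{2:n}$ as $A_{n-1}(-,RLD(x_1,s^\ell))$, which comes from a new input sequence, not from $s$. So the hypothesis should be stated either for all input sequences or for all elements of $Aut(T_n)$.
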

\begin{proof}
By induction. The claim is trivial for $n=1$.

Now, for an element $x_{1:n}$ to be in an orbit, that is $A_n(x, s^l) = x$ it must be the case that $x_{1:n-1}$ is sent to itself by $s^l$ also. Let $v$ be the smallest integer for which this holds; by induction it is a power of 2. Now, if $RLD_{n-1}(x_{1:n-1}, s^v)$ is of even length then $x$ is itself in an orbit of length $v$, a power of 2. If it is of odd length, then $x$ is in an orbit of length $2v$, a power of 2.

\end{proof}

The following lemma is convenient:

\begin{lemma}\label{obvious}

An element $s$ has $m$ cycles of length $2^v$, $v>0$ iff $s^2$ has $2m$ cycles of length $2^{v-1}$. Further, it suffices to consider cycles in $s^2_{LL}$, $s^2_{LR}$, $s^2_{RL}$, $s^2_{RR}$.
        
\end{lemma}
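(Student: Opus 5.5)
The first claim is a general fact about permutations, and I will prove it by decomposing $s$ into disjoint cycles. Let $C$ be a cycle of $s$ of length $2^v$ with $v>0$. On the points of $C$, $s^2$ acts as the square of a $2^v$-cycle. That square splits into exactly $\gcd(2,2^v)=2$ disjoint cycles, each of length $2^{v-1}$, namely the points of $C$ at even and at odd distance along the cycle. Cycles of $s$ of length $1$ remain fixed points of $s^2$. By the preceding lemma every cycle of $s$ has length a power of $2$, so no other cycle lengths occur.

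Thus each cycle of $s$ of length $2^u$ with $u\ge 1$ contributes precisely two cycles of length $2^{u-1}$ to $s^2$. The only complication is $v=1$: fixed points of $s^2$ also arise from fixed points of $s$. For $v\geq 2$ the correspondence is exact, and $s$ has $m$ cycles of length $2^v$ if and only if $s^2$ has $2m$ cycles of length $2^{v-1}$. For $v=1$, the statement must be read with the fixed points of $s$ separated off, i.e.\ $2m$ counts the fixed points of $s^2$ not fixed by $s$. Alternatively, one can restrict attention to points whose $s$-orbit has length at least $2$. I would state this caveat explicitly; the intended application is to maximal-length orbits, where $v\ge 2$ in the relevant cases. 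Both directions follow from the same bijection between cycles of $s$ of length $2^v$ and unordered pairs of $s^2$-cycles of length $2^{v-1}$ interchanged by $s$.

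For the second claim, take $s$ with $s_f=1$; otherwise the root is fixed and one simply recurses into $s_L,s_R$. The multiplication rule gives $s^2=(0,s_Ls_R,s_Rs_L)$. Since $s^2_f=0$, $s^2$ preserves the two subtrees and acts on them by $s^2_L=s_Ls_R$ and $s^2_R=s_Rs_L$. These two are conjugate via $s_L$, so they have identical cycle types. If moreover the root of each subtree is flipped, $s^2_L=(1,s^2_{LL},s^2_{LR})$, and squaring again, or simply reading off the level below, reduces the cycle structure to that of the four pieces $s^2_{LL},s^2_{LR},s^2_{RL},s^2_{RR}$. In the general case, the cycles of $s^2$ are the union of the cycles of $s^2_L$ and $s^2_R$, each of which decomposes according to its own root flip. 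I would phrase ``it suffices to consider'' as this statement: the cycle type of $s^2$ is the multiset union of the cycle types of $s^2_L$ and $s^2_R$, and when their roots are unflipped, of the four grandchildren.

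The main obstacle is the ambiguity in the statement rather than any deep step. The count for $v=1$ needs the fixed-point caveat above. The grandchildren reduction needs the observation that the root of $s^2_L$ may be flipped, in which case one uses the conjugacy $s^2_R=s_L^{-1}s^2_Ls_L$ and the identity $s^2_{LL}\sim s^2_{LR}$ up to conjugacy to pair cycles correctly. I expect everything else to be routine bookkeeping from the wreath product multiplication rule.
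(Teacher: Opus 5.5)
Your treatment of the first claim is sound. Your $v=1$ caveat is also a genuine correction: fixed points of $s$ remain fixed under $s^2$, so the literal ``iff'' fails for $v=1$. The paper calls this part obvious and only applies it with $v\ge 2$.

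The gap is in the second claim. You never show that the roots of $s^2_L$ and $s^2_R$ are unflipped. The missing idea is the structural fact that every $s\in\mathcal{K}_n$ satisfies $s_{Lf}=s_{Rf}$. There are three ways to see it:
\begin{itemize}
\item It is the flip component of the defining condition $\Delta_{n-1}(s_L)=\Phi(\Delta_{n-1}(s_R))$ of $\mathcal{J}_n$, since both $\Delta$ and $\Phi$ preserve the flip coordinate.
\item Both children of the generator $[p]_n=(1,[p]_{n-1}^p,[q]_{n-1}^p)$ have root flip $p \bmod 2$, similarly for $[q]_n$, and the property is closed under products.
\item At the level of the automaton, the second digit of the state flips iff $\Sigma(s)$ is odd, independently of $x_1$.
\end{itemize}
Given this, $(s^2)_{Lf}$ and $(s^2)_{Rf}$ both vanish. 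If $s_f=0$ they are $2s_{Lf}$ and $2s_{Rf}$; if $s_f=1$ both equal $s_{Lf}+s_{Rf}=0$. Hence $s^2$ fixes the top two levels and acts on the leaves as the disjoint union of $s^2_{LL},s^2_{LR},s^2_{RL},s^2_{RR}$. This is exactly the paper's argument.

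Without this fact the reduction is false, and your fallback for the flipped case does not repair it. If $s^2_L=(1,X,Y)$, then $s^2$ interchanges the $LL$ and $LR$ subtrees. Its cycles there come from $XY$ (each $\ell$-cycle of $XY$ yields a $2\ell$-cycle), not from $X$ and $Y$ individually. For example, with $e$ the identity, $s=(1,(1,e,e),(0,e,e))$ gives $s^2=(0,(1,e,e),(1,e,e))$: all four grandchildren are trivial, yet $s^2$ fixes no leaf.

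Appealing to $s^2_{LL}\sim s^2_{LR}$ is neither justified nor sufficient. In general these two pieces need not be conjugate. Even when they are, conjugacy does not fix the cycle type: for $X$ of order $4$ in $Aut(T_3)$, $(1,X,X)$ has two $4$-cycles while $(1,X,X^{-1})$ has four $2$-cycles, although $X^{-1}\sim X$.

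Your second paragraph is also inconsistent about whether the reduction needs the subtree roots flipped or unflipped. The correct statement is your conditional one: when the roots of $s^2_L$ and $s^2_R$ are unflipped, the cycle type of $s^2$ is the union over the four grandchildren. What your proof lacks is the argument that this hypothesis always holds in $\mathcal{K}_n$.
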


\begin{proof}

    The first part of the statement is obvious.

    The second follows because, since $s_{Lf} = s_{Rf}$, we must have that $(s^2)_f = (s^2)_{Lf} = (s^2)_{Rf} = 0$. Therefore we can break up the action of $s^2$ into the four independent pieces $s^2_{LL}$, $s^2_{RL}$, $s^2_{LR}$, $s^2_{RR}$.  
\end{proof}

This lets us analyze orbits of $\mathcal{K}_n^{p,q}$ in terms of $\mathcal{K}_{n-2}^{p,q}$.

\begin{corollary}

The length of an orbit of $A_n^{p,q}(-,s)$ is upper bounded by $ 2^{\lceil n/2\rceil}$
    
\end{corollary}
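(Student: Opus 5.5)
We argue by strong induction on $n$, with Lemma \ref{obvious} as the engine. Every element $s \in \mathcal{K}_n^{p,q}$ lies in $\mathcal{J}_n^{p,q}$, so we may work with an arbitrary $g \in \mathcal{J}_n^{p,q}$ and show that every cycle of $g$ on the $2^n$ states (leaves of the depth-$n$ part of the tree) has length at most $2^{\lceil n/2 \rceil}$. This is slightly stronger than the corollary, which only concerns $s \in \mathcal{K}_n^{p,q}$. It is harmless because the recursive structure is that of $\mathcal{J}$: for $g \in \mathcal{J}_n$ we have $g_L, g_R \in \mathcal{J}_{n-1}$, and for $n\ge 3$ also $g_{LL},g_{LR},g_{RL},g_{RR}\in\mathcal{J}_{n-2}$.

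The base cases are immediate. For $n=1$ the group is $\mathbb{Z}_2$ acting on two states, so cycles have length at most $2 = 2^{1}$. For $n=2$ we have $g_L=g_R$, so $g^2$ fixes the root flip and acts on each half by $g_L^2$ or $g_Lg_R=g_L^2$. Since $g_L^2$ is trivial in $Aut(T_2)$, $g^2=1$ and all cycles have length at most $2=2^{\lceil 2/2\rceil}$.

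For the inductive step, take $n\ge 3$ and $g\in\mathcal{J}_n$. The condition $\Delta_{n-1}(g_L)=\Phi(\Delta_{n-1}(g_R))$ compares first coordinates, so $g_{Lf}=g_{Rf}$; this is the fact used in Lemma \ref{obvious}. As that lemma notes, $g^2$ then fixes the top two levels, and its action splits into four independent pieces $(g^2)_{LL},(g^2)_{LR},(g^2)_{RL},(g^2)_{RR}$. Each piece is a product of two of the elements $g_{LL},g_{LR},g_{RL},g_{RR}$, computed via the wreath multiplication rule, so it lies in $\mathcal{J}_{n-2}$ because that set is a group. By the induction hypothesis every cycle of $g^2$ on the leaves has length at most $2^{\lceil (n-2)/2\rceil}=2^{\lceil n/2\rceil-1}$.

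Finally, take any cycle of $g$. It has length $2^v$ by the power-of-two lemma. If $v=0$ there is nothing to prove. Otherwise, by Lemma \ref{obvious} it splits under $g^2$ into cycles of length $2^{v-1}$. Hence $2^{v-1}\le 2^{\lceil n/2\rceil-1}$, so $2^v\le 2^{\lceil n/2\rceil}$.

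The only step needing care is checking that the four pieces of $g^2$ land in $\mathcal{J}_{n-2}$, rather than only in $Aut(T_{n-1})$, so that the induction can be applied to them. This follows from closure of $\mathcal{J}_{n-2}$ under multiplication (part (a) of the group theorem), together with the fact that the sub-elements of an element of $\mathcal{J}_n$ lie in $\mathcal{J}_{n-1}$ by Definition \ref{group}. The rest is bookkeeping.
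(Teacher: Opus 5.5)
Your proof is correct and is essentially the paper's argument: induct in steps of two, use Lemma~\ref{obvious} to split $s^2$ into the four independent pieces $(s^2)_{LL},(s^2)_{LR},(s^2)_{RL},(s^2)_{RR}$, and apply the bound at level $n-2$. Your extra bookkeeping only makes explicit what the paper leaves implicit: you place the pieces in $\mathcal{J}_{n-2}$ via closure, and you note that cycle lengths are powers of two, which holds for any element of the $2$-group $Aut(T_{n+1})$ and not only for automaton elements.
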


\begin{proof}

We proceed by induction. The claim is obvious for $n = 1,2$. 

Then for larger $n$, we can consider $s^2_{LL}$, etc., which have cycles of length at most  $ 2^{\lceil n/2\rceil -1}$ by induction. Ergo $s$ has cycles of length at most $ 2^{\lceil n/2\rceil}$
    
\end{proof}

The following lemma is a special case of lemma \ref{mod}: 
\begin{lemma}
    For $n \geq 3$, $\psi_n(g) = \psi_{n-1}(g_L) + \psi_{n-1}(g_R) \mod 2$
\end{lemma}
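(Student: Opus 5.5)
The statement is the special case of Lemma \ref{mod} in which everything is reduced modulo $2$, and I would derive it directly from that lemma. For $n \geq 3$ we have $\lfloor (n-1)/2 \rfloor \geq 1$, so $2$ divides $2^{\lfloor (n-1)/2 \rfloor}$. Lemma \ref{mod} gives
$$\psi_n(g) \equiv \frac{\psi_{n-1}(g_L) + \psi_{n-1}(g_R)}{p+q} \pmod{2^{\lfloor (n-1)/2 \rfloor}},$$
and reducing this congruence modulo $2$ gives the same congruence modulo $2$.

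The only remaining point is to remove the division by $p+q$. Throughout this section $p+q$ is odd, so $p+q$ is a unit in $\mathbb{Z}_{2^k}$ for every $k$, and it reduces to $1$ in $\mathbb{Z}_2$. Hence its inverse also reduces to $1$ modulo $2$. Therefore
$$\frac{\psi_{n-1}(g_L)+\psi_{n-1}(g_R)}{p+q} \equiv \psi_{n-1}(g_L)+\psi_{n-1}(g_R) \pmod 2,$$
which is the claim.

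One point needs care. The values $\psi_{n-1}(g_L)$ and $\psi_{n-1}(g_R)$ live in $\mathbb{Z}_{2^{\lfloor (n-1)/2 \rfloor}}$, while $\psi_n(g)$ lives in $\mathbb{Z}_{2^{\lfloor n/2 \rfloor}}$. I would note that reduction modulo $2$ is well defined on each of these rings because each modulus is at least $2$ when $n \geq 3$. I would also note that division by the odd number $p+q$ commutes with the reduction maps $\mathbb{Z}_{2^a} \to \mathbb{Z}_{2^b}$ for $b \leq a$. Both facts are immediate, so there is no real obstacle; the proof is a two-line specialization.
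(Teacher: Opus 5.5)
Your proposal is correct and follows the paper's route: the paper states this lemma as a special case of Lemma~\ref{mod} and gives no further proof. Your argument makes that specialization explicit by using $\lfloor (n-1)/2\rfloor \ge 1$ for $n\ge 3$, reducing modulo $2$, and noting that the inverse of the odd number $p+q$ is $\equiv 1 \pmod 2$.
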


\begin{theorem}
    For odd $n \geq 3$, $A_n^{p,q}(-,s)$ has exactly two orbits of length $2^{\lceil n/2 \rceil}$ iff $|s|$ is odd and $\Sigma(s)$ is odd; otherwise, it has none.
\end{theorem}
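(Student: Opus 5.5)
The plan is to prove a stronger statement about arbitrary group elements and then specialize to the generators. For $g\in\mathcal{J}_n^{p,q}$ with $n\ge 3$ odd, write $N(g)$ for the number of orbits of length $2^{\lceil n/2\rceil}=2^{(n+1)/2}$. I will aim to show
\[
N(g)=2 \text{ if } g_f=1 \text{ and } \psi_n(g)\equiv 1 \pmod 2, \qquad N(g)=0 \text{ otherwise.}
\]

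\emph{Reduction to the generalized claim.} An input $s=s_1\cdots s_m$ acts as $[s_1]_n\cdots[s_m]_n\in\mathcal{K}_n\subset\mathcal{J}_n$. Every generator flips the root, so this element has $g_f\equiv |s|$. Since $\psi_n$ is a homomorphism with $\psi_n([p]_n)=p$ and $\psi_n([q]_n)=q$, we get $\psi_n(g)=\Sigma(s)$. So the theorem follows from the claim.

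\emph{Descending two levels.} By the corollary, orbit lengths are bounded by $2^{(n+1)/2}$.

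\begin{itemize}
\item If $g_f=0$, then $g$ acts separately on the two subtrees, which have depth $n-1$. Orbits there have length at most $2^{\lceil (n-1)/2\rceil}=2^{(n-1)/2}$, so $N(g)=0$.
\item If $g_f=1$, then $g^2=(0,g_Lg_R,g_Rg_L)$, and the two halves are conjugate. Each $g$-orbit of length $2\ell$ corresponds to exactly one orbit of $h:=g_Lg_R\in\mathcal{J}_{n-1}$ of length $\ell$. Hence $N(g)$ equals the number of orbits of $h$ of length $2^{(n-1)/2}$.
\item The defining condition $\Delta_{n-1}(g_L)=\Phi(\Delta_{n-1}(g_R))$ forces $g_{Lf}=g_{Rf}$, so $h_f=0$. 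As in Lemma~\ref{obvious}, $h$ then splits into $h_L,h_R\in\mathcal{J}_{n-2}$, and $N(g)=N(h_L)+N(h_R)$.
\item Explicitly, $h_L=g_{LL}g_{RL}$ and $h_R=g_{LR}g_{RR}$ when $g_{Lf}=0$. When $g_{Lf}=1$ they are $h_L=g_{LL}g_{RR}$ and $h_R=g_{LR}g_{RL}$.
\end{itemize}

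\emph{Induction and parity bookkeeping.} For $n-2\ge 3$ the induction hypothesis gives $N(h_L),N(h_R)\in\{0,2\}$, so the task is to show two things. First, $h_{Lf}=h_{Rf}=1$ holds exactly when $g_{Lf}=g_{Rf}=1$ and in addition $g_{LLf}\ne g_{RLf}$ (or the analogous mixed condition). Second, exactly one of $\psi_{n-2}(h_L)$ and $\psi_{n-2}(h_R)$ is odd precisely when $\psi_n(g)$ is odd.

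For the second point I would use the mod-2 lemma to express $\psi_n(g)$ mod 2 as $\psi_{n-2}$ of the four grandchildren. Since $p+q$ is odd, this gives $\psi_n(g)\equiv\psi_{n-1}(g_L)+\psi_{n-1}(g_R)\equiv\sum\psi_{n-2}(g_{XY})$. The relation \eqref{defin}, reduced mod 2, should then pin down the individual summands.

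I expect this to be the main obstacle. From a mod-2 sum alone one can only control $\psi_{n-2}(h_L)+\psi_{n-2}(h_R)$, not which one is odd. To rule out both being odd (which would give $N(g)=4$), I would bring in the finer data $\Delta_{n-1}$ and $\Delta_{n-2}$. In particular I would use the flip components of $\Delta$, together with the fact that $\Phi$ changes $x$ to $\delta(f)(p+q)(p-q)-x$, whose parity flips exactly when $f=1$. The expected mechanism is that $\psi_{n-2}(h_L)$ and $\psi_{n-2}(h_R)$ differ by the odd quantity coming from $\Phi$, so their parities are opposite whenever both flips equal $1$. If this gets messy, I would first run the two-level descent on the generator products directly and check the parities numerically.

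\emph{Base case.} The base case is $n=3$, checked directly on $\mathcal{J}_3$ (of order $8$). For example, $[1]_3$ with $p=1,q=2$ acts as an $8$-cycle split into two $4$-cycles, matching Figure~1. Alternatively, one can run the same descent from $n=1$, where $N=1$ iff $g_f=1$ and $\psi_1$ plays no role.
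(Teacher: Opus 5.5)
Your reduction to group elements and the two-level descent are correct, and they follow the same route as the paper. Concretely: $g_f=0$ gives $N(g)=0$, and $g_f=1$ gives $N(g)=N(h_L)+N(h_R)$ with $h=g_Lg_R$ and $h_f=0$.

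The gap is in the parity bookkeeping: nothing in your plan connects the flip bits $h_{Lf}=h_{Rf}$ to $\psi_n(g)$. Your first point only expresses them through other flip bits. As a plain computation, $h_{Lf}=g_{LLf}+g_{RLf}$ in both cases of $g_{Lf}$. But the theorem needs $h_{Lf}=1$ exactly when $\psi_n(g)$ is odd.

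The missing ingredient is what the paper uses when it says the flips of $S_4,S_6$ are ``controlled by the parity of $s_2,s_3$'': for $m\ge 2$ and $x\in\mathcal{J}_m$,
\[
x_{Lf}=x_{Rf}\equiv\psi_m(x)\pmod 2 .
\]
It is proved in three cases.
\begin{itemize}
\item For $m=2$ it is the definition of $\psi_2$.
\item For even $m\ge 4$, reduce the formula for $\psi_m$ mod $2$: the $2(\cdots)$ term vanishes, and $(p+q)(p-q)$ and $(p+q)^2$ are odd.
\item For odd $m$, reduce $\Delta_{m-1}(x_L)=\Phi(\Delta_{m-1}(x_R))$ mod $2$. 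Use that the second coordinate of $\Delta_{m-1}(y)$ is $\equiv\psi_{m-1}(y)$, and that $\Phi$ changes the parity of that coordinate exactly when $f=1$. This is the very observation you wanted to use; it belongs here.
\end{itemize}
Applied to $h\in\mathcal{J}_{n-1}$, it gives $h_{Lf}=h_{Rf}\equiv\psi_{n-1}(g_L)+\psi_{n-1}(g_R)\equiv\psi_n(g)\pmod 2$.

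The obstacle you single out is not real. You already have $\psi_{n-2}(h_L)+\psi_{n-2}(h_R)\equiv\sum\psi_{n-2}(g_{XY})\equiv\psi_n(g)\pmod 2$. So when $\psi_n(g)$ is odd, exactly one of the two is odd automatically, and which one does not matter since $h_{Lf}=h_{Rf}$.

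The configuration giving $N(g)=4$ (both $\psi_{n-2}(h_X)$ odd, both flips $1$) can only arise when $\psi_n(g)$ is even. It is excluded not by any $\Phi$-induced parity difference between $h_L$ and $h_R$, but by the flips vanishing, i.e.\ by the lemma above. Likewise, only the lemma guarantees the flips equal $1$ when $\psi_n(g)$ is odd.

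With the lemma added, the induction closes at once. If $\psi_n(g)$ is even, $N(g)=0+0$. If $\psi_n(g)$ is odd, $N(g)=2+0$. This is the paper's argument.
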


\begin{proof}

    On a group level this is equivalent to the statement $s$ has two orbits of length $2^{\lceil n/2 \rceil}$ iff $\psi(s) = 1 \mod 2$ and $s_f = 1$.
    
    As usual we proceed by induction. Consider the case $n=3$. We can represent an element $s$ of $\mathcal{K}_3^{p,q}$  like:
    \begin{center}
    \begin{forest}
    [ {$(s_1, f_1)$} 
    [{$(s_2, f_2)$} [{$f_4$}] [{$f_5$}]] 
    [{$(s_3,f_3)$} [$f_6$] [$f_7$]] ]
    \end{forest}
    \end{center}

    Here $s_1, s_2, s_3 \in \mathbb{Z}_2$ are the $\psi(s)$, $\psi(s_L)$, etc. The $f_i \in \mathbb{Z}_2$ are the $s_f, s_{Lf}, s_{LLf}$, etc.

    In order for $s$ to have an orbit of length 4, $s^2$ must have an orbit of length 2. For this to be the case we must have $f_5 \ne f_6$ and $f_1 = 1$. $f_5$ and $f_6$ are controlled by $s_2$ and $s_3$, so we must have $s_2 \ne s_3$, which is the case iff $s_1 = 1$. So $s_1 = 1, f_1 = 1$ are necessary and sufficient. Further, if this holds we will have $(s^2)_{LLf} = (s^2)_{LRf} = (s^2)_{RLf} = (s^2)_{RRf} = 1$.
    So $s^2$ will have 4 orbits of length 2, ergo $s$ will have 2 orbits of length 4.
    
    Now for the inductive step, consider an element $s \in \mathcal{K}_n^{p,q}$, $n$ odd. We can represent it as follows:

\begin{center}
\begin{forest}
    [ {$(s_1, f_1)$} 
    [{$(s_2, f_2)$} [{$S_4$}] [{$S_5$}]] 
    [{$(s_3,f_3)$} [$S_6$] [$S_7$]] ]
\end{forest}
\end{center}

Where $s_1, s_2, s_3 \in \mathbb{Z}_{2^{\lfloor n/2 \rfloor}}$, $f_1, f_2, f_3 \in \mathbb{Z}_2$, and $S_4, S_5, S_6, S_7 \in \mathcal{K}_{n-2}^{p,q}$

Now, for $s$ to have an orbit of length $2^{\lceil n/2 \rceil}$ we need one of $s^2_{LL}, s^2_{LR}, s^2_{RL}, s^2_{RR}$
to have an orbit of length $2^{\lceil n/2 \rceil-1}$, which requires $f=1$ and odd $\psi$. Now, $S_{4f} = S_{5f}$ and $S_{6f} = S_{7f}$, so in order for one of the $\{s^2_{LL},...\}$ to have $f=1$, we must have $f_1 = 1$. Further, we must \emph{not} have $S_{4f} = S_{6f}$. $S_{4f}$ and $S_{6f}$ are controlled by the parity of $s_2$ and $s_3$ respectively, so this is equivalent to $s_2$ and $s_3$ having differing parity, which is in turn equivalent to $s_1$ having odd parity.

So this means $s_1$ odd, $f_1=1$ is necessary. But is it sufficient? Yes: $s_1$ odd, $f_1=1$ imply that $\psi (s^2_L) = \psi(s^2_R)$ are both odd. This in turn implies that $(s^2)_{LLf} = (s^2)_{LRf} =(s^2)_{RLf}=(s^2)_{RRf}=1$. But it also implies that $\psi((s^2)_{LR})$ and  $\psi((s^2)_{LL})$ have different parities, and $\psi((s^2)_{RL})$ and $\psi((s^2)_{RR})$ have different parities. All together, this implies that 2 of $s^2_{LL}, s^2_{LR}, s^2_{RL}, s^2_{RR}$ have $\psi$ odd and $f=1$, meaning that 2 of them have 2 orbits of length $2^{\lceil n/2 \rceil-1}$. So there are 4 such orbits of $s^2$ in total, so $s$ has 2 orbits of length $2^{\lceil n/2 \rceil}$

\end{proof}

\section{Acknowledgements}
I would like to thank Bobby Shen for his paper \cite{shen} which inspired me to look for group-theoretic structure in $A_n^{p,q}$. Although all the main concepts and theorems in this paper are due to me, AI assistants proved very useful in my investigations. In particular GPT5 helped greatly with GAP/pysage/LaTeX coding, literature search, and discussion of the mathematics. The exact formula for $|\mathcal{J}_n|$ is due to Kimi2.6.


\begin{thebibliography}{9}

\bibitem{oldenburger} Rufus Oldenburger, Exponent trajectories in symbolic dynamics, Trans. Amer. Math. Soc., Vol. 46 (1939), pp. 453-466.

\bibitem{kolakoski} W. Kolakoski, Self Generating Runs, Problem 5304, American Math. Monthly 72 (1965), 674. Solution: American Math. Monthly 73 (1966), 681--682.

\bibitem{shen}
Shen, Bobby. ``The Kolakoski sequence and related conjectures about orbits." Experimental Mathematics 29.1 (2020): 54-65.

\bibitem{chvatal}
Chvátal, Vašek. ``Notes on the Kolakoski sequence." Rapport, DIMACS Techn. Rep (1994).

\bibitem{branch}
Bartholdi, Laurent, Rostislav I. Grigorchuk, and Zoran Šuni. ``Branch groups." Handbook of algebra. Vol. 3. North-Holland, 2003. 989-1112.

\end{thebibliography}
\end{document}